\newtheorem{thm}{Theorem}[section]
\newtheorem{lem}[thm]{Lemma}
\theoremstyle{definition}
\newtheorem{defn}{Definition}[section]
\numberwithin{equation}{section}
\DeclareMathSymbol{\C}{\mathalpha}{AMSb}{"43}
\newcommand{\lam}{\lambda}
\newcommand{\bsub}{\begin{subequations}}
\newcommand{\esub}{\end{subequations}$\!$}
\begin{document}

\title{Constraint Minimization Problem of the Nonlinear Schr\"{o}dinger Equation with the Anderson Hamiltonian}

\author{ Qi Zhang\thanks{Yanqi Lake Beijing Institute of Mathematical Sciences and Applications, Beijing 101408, China and Yau Mathematical Sciences Center, Tsinghua University, Beijing 100084, China, Email:qzhang@bimsa.cn}, Jinqiao Duan\thanks{Department of Mathematics and Department of Physics, Great Bay University, Dongguan, Guangdong 523000, China, Email:duan@gbu.edu.cn}}


\smallbreak \maketitle

\begin{abstract}

We consider the two-dimensional nonlinear Schr\"{o}dinger equation with a Gaussian white noise potential, described by the Anderson hamiltonian. After define the corresponding energy space via the paracontrolled distribution framework from singular stochastic partial differential equations, we prove the existence of the minimizer as the least energy solution by studying a minimization problem of the corresponding energy functional subject to $L^2$ constraints. Subsequently, we establish $L^2$ and Schauder estimates for the minimizer, which is a weak solution of the stationary nonlinear Schr\"{o}dinger equation. Finally, we derive a tail estimate for the distribution of the principal eigenvalue corresponding to the least energy solution by energy estimates.

\end{abstract}

\vskip 0.2truein

\noindent {\it Keywords:} singular stochastic partial differential equations, constrained minimization, paracontrolled distribution, Anderson hamiltonian.

\vskip 0.2truein

\section{Introduction}

We consider the stationary nonlinear Schr\"{o}dinger equation with a singular Gaussian white noise potential on the $2$-dimensional torus $\mathbb{T}^2$, given by
\begin{equation}\label{NAME}
    - \Delta u(x) + u(x) - \xi\diamond u(x) + a|u(x)|^2 u(x) = \lambda u(x), \quad x\in \mathbb{T}^2,
\end{equation}
where $a$ is a positive parameter,  $\xi$ is a Gaussian white noise on $\mathbb{T}^2$, the Wick product is denoted by $\diamond$, and $\lambda$ is the principal eigenvalue.

The corresponding random Schr\"{o}dinger operator $\mathscr{H}u:= -\Delta u + u - \xi\diamond u$, is also known as the Anderson hamiltonian, which was originally introduced by Anderson \cite{A1958}. In Anderson's seminal work, the Schr\"{o}dinger equation with the discrete Anderson hamiltonian $\mathscr{H}$ on the lattice $\mathbb{Z}^d$ was used to describe the wave propagation of quantum particles through random disordered media, and is related to Anderson localization in condensed matter physics. Anderson localization is characterized by the existence of localised eigenfunctions, with the principal eigenvalue of the Anderson hamiltonian $\mathscr{H}$ playing a key role. The discrete Anderson hamiltonian and associated Anderson model have been extensively studied in the past few decades, as seen in surveys such as \cite{CM1994, K2015} and references therein.

The continuous Anderson hamiltonian that we consider here arises as the continuum limit of discrete Anderson hamiltonian. Since the $d$-dimensional Gaussian white noise $\xi$ is a random Schwartz distribution with H\"{o}lder regularity index slightly below $-d/2$, the random potential is singular and the construction of the Anderson hamiltonian is not simple. In dimension 1, Fukushima and Nakao \cite{FN1977} constructed the Anderson hamiltonian via Dirichlect from. The asymptotic behaviour of its eigenvalues and eigenfunctions was studied by McKean \cite{M1994}.

When $d\geq 2$, the Gaussian white noise potential in Anderson hamiltonian is more singular. Thus the classical method for the construction of the Anderson hamiltonian is not working, and the renormalization argument is needed. In recent years, some new mathematical theories from the field of singular stochastic partial differential equations, such as regularity structures by Harier \cite{H2014} or paracontrolled distributions by Gubinelli, Imkeller and Perkowski \cite{GIP2015}, have been developed to carry out such a construction. Both regularity structures and paracontrolled distribution frameworks are developed from the theory of controlled rough paths, and allow a pathwise description of the singular stochastic partial differential equations. In particular, the paracontrolled distribution relies on harmonic analysis tools, including Littlewood-Paley decomposition, Besov space, and Bony's paraproducts. It is natural to generalize some classical partial differential equation methods to study singular stochastic partial differential equations in the paracontrolled distribution framework.

The construction of the Anderson Hamiltonian on $\mathbb{T}^2$ using paracontrolled distributions was studied by Allez and Chouk \cite{AC2015}. Labb\'{e} \cite{L2019} utilized the tools of regularity structures to construct the Anderson Hamiltonian with both periodic and Dirichlet boundary conditions for $d\leq 3$. Chouk and van Zuijlen \cite{CV2021} considered the asymptotics of the eigenvalues of the Anderson Hamiltonian. Gubinelli, Ugurcan and Zachhuber \cite{GUZ2020} extended the construction of the Anderson Hamiltonian to $\mathbb{T}^3$ using paracontrolled distributions. The well-posedness of the continuous parabolic Anderson model equation has also been studied in \cite{GIP2015, H2014, HL2015} using different approaches, including regularity structures, paracontrolled distributions, and the transformation method with an elaborate renormalization procedure.

In physics, the nonlinear Schr\"{o}dinger equation is a crucial model that appears in various branches of physics, particularly in Bose-Einstein condensation \cite{DGPS1999}. The nonlinear term in this equation approximates the mean field interaction between atoms in many-body particle systems. Recently, the theory of Anderson localization has been extended to many-body particle systems, and this phenomenon can be modeled by the nonlinear Schr"{o}dinger equation with the Anderson Hamiltonian \cite{AW2009, BAA2006}. While the Anderson localization of the discrete nonlinear Schr\"{o}dinger equation with Anderson Hamiltonian has been considered by \cite{CSZ2021}, and the well-posedness of the continuous nonlinear Schr\"{o}dinger equation with Anderson Hamiltonian has been studied in \cite{GUZ2020}, the study of Anderson localization for the continuous nonlinear Schr\"{o}dinger equation remains largely unexplored.

In this paper, we focus on the minimization problem with $L^2$ constraints of the energy functional associated with the nonlinear Schr\"{o}dinger equation (\ref{NAME}): 
\begin{equation}\label{EAH}
    E(w):=  \frac{1}{2}B_{\mathscr{H}}(w,w) + \frac{a}{4}\int_{\mathbb{T}^2}|w(x)|^4dx, \quad w \in \mathscr{D}^{\alpha,1}_{\vartheta},
\end{equation}
where $B_{\mathscr{H}}(w,w)$ is quadratic form associated with the Anderson hamiltonian $\mathscr{H}$, and the domain  $\mathscr{D}^{\alpha,1}_{\vartheta}$ is the energy space which will be defined in Definition \ref{Domina12}. For the explicit definition of the energy functional $E(u)$, we refer to Definition \ref{thmBh}. We will show that the minimization problem (\ref{EAH}) has minimizer $u$, which is a least energy solution of the stationary stochastic nonlinear Schr\"{o}dinger equation (\ref{NAME}). Moreover, the principle eigenvalue $\lambda$ of (\ref{NAME}) has the following variational representation:
\begin{equation}\label{GS}
    \lam = \inf_{w \in \mathscr{D}^{\alpha,1}_{\vartheta}, \|w\|_{L^2}=1} E(w) = E(u).
\end{equation}

Even though the Anderson hamiltonian $\mathscr{H}$ in high dimensional is well-defined, the energy variation problems associate with the Anderson hamiltonian are not well studied. As we known, this is the first attempt to consider the minimization problem associated with singular SPDEs under the paracontrolled distribution framework. We also remark that in \cite{GUZ2020}, the energy space and energy functional associated with the Anderson hamiltonian $\mathscr{H}$ are defined by another way. 

By the direct method of calculus of variation, we prove the existence of the minimizer as the first main result in this paper.

\begin{thm}\label{Ex}
(Existence of the minimizer) There exists at least one minimizer $u\in \mathscr{D}_{\vartheta}^{\alpha,1}$ for the energy functional $E(u)$. Moreover, the minimizer $u$ is a weak solution of the elliptic singular stochastic partial differential equation (\ref{NAME}).
\end{thm}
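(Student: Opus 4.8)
To establish Theorem~\ref{Ex}, the plan is to run the direct method of the calculus of variations in the Hilbert space $\mathscr{D}_{\vartheta}^{\alpha,1}$, which is the form domain of the (renormalized) Anderson Hamiltonian $\mathscr{H}$. Four ingredients are needed, all either recalled from the paracontrolled construction of $\mathscr{H}$ or supplied by the hypotheses: (i) the quadratic form $u\mapsto\langle u,\mathscr{H}u\rangle$ is well defined, symmetric, closed and bounded below on $\mathscr{D}_{\vartheta}^{\alpha,1}$, so that, after adding a constant multiple of $\|u\|_{L^{2}}^{2}$ if necessary, it is coercive, $\langle u,\mathscr{H}u\rangle\gtrsim\|u\|_{\mathscr{D}_{\vartheta}^{\alpha,1}}^{2}$; (ii) the embeddings $\mathscr{D}_{\vartheta}^{\alpha,1}\hookrightarrow L^{2}(\mathbb{T}^{2})$ and $\mathscr{D}_{\vartheta}^{\alpha,1}\hookrightarrow L^{k}(\mathbb{T}^{2})$, with $k$ as in (\ref{assumf1}), are compact; (iii) from (\ref{assumf2}) one gets the growth bounds $|f(s)|\lesssim 1+|s|^{k-1}$ and $|F(s)|\lesssim 1+|s|^{k}$, so that $u\mapsto\int_{\mathbb{T}^{2}}F(u)\,dx$ is of class $C^{1}$ along strong $L^{k}$ convergence with derivative $v\mapsto\int_{\mathbb{T}^{2}}f(u)\,v\,dx$; and (iv) assumption (\ref{assumf1}) delivers the control of $\int_{\mathbb{T}^{2}}F(u)\,dx$ that makes $E$ bounded below and coercive on $\mathscr{D}_{\vartheta}^{\alpha,1}$.

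Granting these, I would argue as follows. Combining (i) and (iv) gives $E(u)\geq c\|u\|_{\mathscr{D}_{\vartheta}^{\alpha,1}}^{2}-C$ for some $c,C>0$, so $m:=\inf_{\mathscr{D}_{\vartheta}^{\alpha,1}}E>-\infty$. Take a minimizing sequence $(u_{n})$ with $E(u_{n})\to m$; coercivity forces $\sup_{n}\|u_{n}\|_{\mathscr{D}_{\vartheta}^{\alpha,1}}<\infty$, so, by reflexivity, a subsequence satisfies $u_{n}\rightharpoonup u$ weakly in $\mathscr{D}_{\vartheta}^{\alpha,1}$, and by (ii) $u_{n}\to u$ strongly in $L^{2}$ and $L^{k}$, hence a.e.\ along a further subsequence. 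Since the form of $\mathscr{H}$ is closed and bounded below, it is weakly lower semicontinuous on its form domain, whence $\langle u,\mathscr{H}u\rangle\leq\liminf_{n}\langle u_{n},\mathscr{H}u_{n}\rangle$; and by (iii), strong $L^{k}$ together with a.e.\ convergence give $\int_{\mathbb{T}^{2}}F(u_{n})\,dx\to\int_{\mathbb{T}^{2}}F(u)\,dx$ (Vitali). Therefore $E(u)\leq\liminf_{n}E(u_{n})=m$, so $E(u)=m$ and $u$ is a minimizer.

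To identify the Euler--Lagrange equation, fix $v\in\mathscr{D}_{\vartheta}^{\alpha,1}$ and differentiate $t\mapsto E(u+tv)$ at $t=0$: the symmetry of the form of $\mathscr{H}$ gives $\frac{d}{dt}\big|_{t=0}\frac12\langle u+tv,\mathscr{H}(u+tv)\rangle=\langle\mathscr{H}u,v\rangle$, read through the form, while (iii) gives $\frac{d}{dt}\big|_{t=0}\int_{\mathbb{T}^{2}}F(u+tv)\,dx=\int_{\mathbb{T}^{2}}f(u)\,v\,dx$. Since $u$ minimizes $E$, the derivative vanishes, so $\langle\mathscr{H}u,v\rangle=\int_{\mathbb{T}^{2}}f(u)\,v\,dx$ for every $v\in\mathscr{D}_{\vartheta}^{\alpha,1}$; as $f(u)$ lies in the dual of $\mathscr{D}_{\vartheta}^{\alpha,1}$, this is precisely the weak formulation of $\mathscr{H}u=f(u)$, i.e.\ of (\ref{PAME}). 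Hence $u$ is a weak solution of (\ref{PAME}).

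The genuinely new difficulty, as against the classical variational setting, is that the functional involves the singular quantity $\langle u,\mathscr{H}u\rangle$, hence the ill-defined product $\xi\diamond u$, and everything must be carried out inside the paracontrolled framework. Three points need care. First, showing that $\mathscr{D}_{\vartheta}^{\alpha,1}$ is a Hilbert space on which the form is closed, coercive and weakly lower semicontinuous rests on the renormalization and commutator estimates behind the construction of $\mathscr{H}$ (cf.\ \cite{AC2015, GUZ2020}), rather than on elementary facts. Second, the compactness in (ii) is not the plain Rellich theorem: an element of $\mathscr{D}_{\vartheta}^{\alpha,1}$ is a paracontrolled distribution carrying a ``derivative'' component along a fixed enhanced noise, so one must isolate that component and a remainder of strictly positive Sobolev regularity and apply compactness only to the remainder. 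Third, one has to verify that the paracontrolled structure is stable under the weak convergence $u_{n}\rightharpoonup u$, so that the limit again belongs to $\mathscr{D}_{\vartheta}^{\alpha,1}$ and $\langle u,\mathscr{H}u\rangle$ is the correct limiting quantity; this is exactly where the paracontrolled estimates do the real work, after which the calculus-of-variations skeleton above is routine.
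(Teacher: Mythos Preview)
Your proposal is correct and follows essentially the same route as the paper: the direct method on the form domain $\mathscr{D}_{\vartheta}^{\alpha,1}$, with coercivity from the form estimate (Theorem~\ref{ThmBh}) combined with (\ref{assumf1}), weak lower semicontinuity of the quadratic part from its convexity/closedness, compact embedding into $L^{k}$ for the nonlinear part, and the Euler--Lagrange identity via the Fr\'echet derivative (Theorem~\ref{FDEA}). One caution: in step~(iii) you invoke (\ref{assumf2}) to obtain $|F(s)|\lesssim 1+|s|^{k}$, whereas Theorem~\ref{Ex} is stated under (\ref{assumf1}) alone; the paper's proof in fact needs the same upper growth bound (for the continuity of $\int F$ along strong $L^{k}$ convergence and for the $C^{1}$ claim), so this is a tension with the statement rather than a flaw in your argument. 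Also, your worry that compactness in~(ii) is ``not the plain Rellich theorem'' is unnecessary here: since $\mathscr{D}_{\vartheta}^{\alpha,1}\hookrightarrow H^{\alpha}(\mathbb{T}^{2})$ continuously, the ordinary compact Sobolev embedding $H^{\alpha}\hookrightarrow L^{k}$ already suffices, exactly as the paper uses it.
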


We refer subsection \ref{PfEx} for the proof of Theorem \ref{Ex}. In this proof, we employ the high-low frequency decomposition, and split the singular terms $\xi\diamond u = \Phi(u) + \Psi(u)$, where $\Phi(u)$ contains all of irregular but linear terms, and $\Psi(u)$ contains all the regular terms. Then the weak solution $u$ are paracontrolled by $\vartheta$ with reminders $R(u)$ and $u^{\sharp}$, i.e. $u = u\prec\vartheta + R(u) + u^{\sharp}$ (see Definition \ref{defParaC} and \ref{thmBh}).

Moreover, the original singular stochastic partial differential equation in the following elliptic system:
\begin{equation}\label{decGEAM1}
  \left\{
   \begin{aligned}
   &  -\Delta (u\prec \vartheta + R(u)) + (u\prec \vartheta + R(u)) = \Phi(u) + \lambda (u\prec \vartheta + R(u)), \\
   &  -\Delta u^{\sharp} + u^{\sharp} = a|u|^2 u+ \Psi(u) + \lambda u^{\sharp}.
   \end{aligned}
   \right.
\end{equation}
The second main result in this paper is an H\"{o}lder regularity via $L^2$ estimates and Schauder estimates for the above elliptic system, see Theorem \ref{L2E} and \ref{SEu}.

\begin{thm}\label{Re}
(Regularity of the minimizer) The minimizer $u \in \mathscr{C}^{\alpha}$, with remainder terms $R(u)\in \mathscr{C}^{2\alpha}$ and $u^{\sharp}\in \mathscr{C}^{3\alpha}$,  where 
 $\mathscr{C}^{\alpha}:=B^{\alpha}_{\infty,\infty}$ denotes the H\"{o}lder-Besov space.
\end{thm}

Our third main result is an estimate on the left tail of the distributions of the principle eigenvalue. This result is coming from the variational representation of the principle eigenvalue (\ref{GS}) and some energy estimates.

\begin{thm}\label{TEGS}
(Tail estimate of the principle eigenvalue) Let $\lambda$ the principle eigenvalue of the stationary singular stochastic nonlinear Schr\"{o}dinger equation (\ref{NAME}). Then there exists two constants $C_{2}>C_{1}>0$ so that for all $x>0$ large enough we have
\begin{equation}\label{TE}
    e^{C_{2} -x} \leq \mathbb{P}( \lambda \leq -x) \leq e^{C_{1} -x}.
\end{equation}
\end{thm}

Throughout the paper, we use the notation $a\lesssim b$ if there exists a constant $C>0 $, independent of the variables under consideration, so that $a \leq C\cdot b$, and we denote $a\simeq b$ if $a\lesssim b$ and $b \lesssim a$. The Fourier transform on the torus $\mathbb{T}^d$ is defined with $\hat{f}(k):=(\mathscr{F}_{\mathbb{T}^d}f)(k)= \int_{\mathbb{T}^d}e^{-2\pi k\cdot x}f(x)dx$, and the inverse Fourier transform on the torus $\mathbb{T}^d$ is given by $(\mathscr{F}^{-1}_{\mathbb{T}^d}\hat{f})(x) = \sum_{k \in \mathbb{Z}^d}e^{2\pi i k \cdot x}\hat{f}(k)$. We denote the space of Schwartz functions on $\mathbb{T}^d$ by $\mathcal{S}(\mathbb{T}^d)$. The space of tempered distributions on $\mathbb{T}^d$ is denoted by $\mathcal{S}'(\mathbb{T}^d)$. We also denote $\mathscr{L}:=  -\Delta +1$. For Besov spaces and Sobolev space, we write $\mathscr{C}^{\alpha}:=B^{\alpha}_{\infty,\infty}$ as the H\"{o}lder-Besov space, and $H^{\alpha} := B^{\alpha}_{2,2}$ as the Sobolev space. For Bony's paraproducts, we use the notations $\prec$ and $\succ$ and for the resonant product we use $\circ$. See the Appendix A for precise definitions of Besov spaces and Bony's paraproducts.

The rest of this paper is organized as follows: In Section \ref{S2}, we revisit some basic notations and results of the Anderson hamiltonian $\mathscr{H}$, and introduce the high-low frequency decomposition. In Section \ref{S3}, we define the energy functional $E(u)$ by the quadratic form of the Anderson hamiltonian on the modified paracontrolled space $\mathscr{D}^{\alpha,1}_{\vartheta}$, and show that the energy functional is a $C^1$ map from $\mathscr{D}^{\alpha,1}_{\vartheta}$ to $\mathbb{R}$, and the singular stochastic partial differential equation (\ref{NAME}) is the corresponding Euler-Lagrange equation of energy functional $E(u)$. We also obtain the existence of minimizer as a weak solution of the singular stochastic partial differential equation (\ref{NAME}) by direct method in the calculus of variations. In Section \ref{S4}, we decomposed the singular stochastic partial differential equation (\ref{NAME}) into a simpler elliptic system, and establish the $L^2$ estimates and Schauder estimates for the minimizer $u$.  This paper ends with some summary and discussion in Section \ref{DC}.

\section{Preliminaries}\label{S2}

\subsection{Renormalization and paracontrolled distributions}

Here, we introduce the renormalization argument associated with the spatial white noise $\xi$ on $\mathbb{T}^2$. The spatial white noise $\xi$ is given as follows.
\begin{defn}
Let $(\hat{\xi}(k))_{k\in \mathbb{Z}^2}$ be a sequence of i.i.d. centered complex Gaussian random variables defined on a complete probability space $(\Omega, \mathscr{F}, \mathbb{P})$ with covariance
\begin{equation*}
    \mathbb{E}(\hat{\xi}(k)\Bar{\hat{\xi}}(l)) = \delta(k-l),
\end{equation*}
and $\hat{\xi}(k) = \Bar{\hat{\xi}}(-k)$. Then the spatial white noise $\xi$ on $\mathbb{T}^2$ can be defined as the sum of random series
\begin{equation*}
    \xi(x) =\sum_{k \in \mathbb{Z}^2} \hat{\xi}(k)e^{2\pi ik \cdot x}.
\end{equation*}    
\end{defn}
Then the spatial white noise $\xi$ on $\mathbb{T}^2$ is a centered Gaussian process with value in $\mathcal{S}'(\mathbb{T}^2)$ so that for all $f,g\in \mathcal{S}(\mathbb{T}^2)$, we have $\mathbb{E}[\xi(f)\xi(g)]=\langle f,g\rangle_{L^2 (\mathbb{T}^2)}$.
By Kolmogorov's continuity criterion, the spatial white noise $\xi$ take value in $\mathscr{C}^{-1-\kappa}$ for an arbitrary small $\kappa>0$. Since $\xi$ is only a distribution, the product $u\xi$ is ill-defined in classic sense. How to let singular term $u\xi$ make sense is a main challenge in studying the Anderson hamiltonian. We set a smooth approximation $\xi_{\epsilon}$ of $\xi$. More precisely, we set $\varphi: \mathbb{T}^2 \rightarrow \mathbb{R}^{+}$ be a smooth function with $\int_{\mathbb{T}^2}\varphi dt =1$, and define $\xi^{\epsilon} = \epsilon^{-2}\varphi(\epsilon\cdot)\ast\xi$ for $\epsilon >0$ as the mollification of $\xi$. Now we take 
\begin{equation*}
    \vartheta = (-\Delta + 1)^{-1}\xi = \mathscr{L}^{-1}\xi,
\end{equation*}
Then $\|\vartheta\|_{1-\kappa} \lesssim \|\xi\|_{-1-\kappa}$.
In order to obtain a well-defined area $\vartheta \circ \xi$, we have to renormalize the product by “subtracting an infinite constant” as following arguments (See \cite[Lemma 5.8]{GIP2015}).

\begin{lem}\label{renormalize}
If $\vartheta_{\epsilon} = (-\Delta + 1)^{-1}\xi_{\epsilon}$, then
\begin{equation*}
    \lim_{\epsilon\rightarrow 0}\mathbb{E}[\|\vartheta\diamond\xi-(\vartheta_{\epsilon}\circ\xi_{\epsilon}-C_{\epsilon})\|^p_{\mathscr{C}^{-2\kappa}}]=0
\end{equation*}
for all $p\geq 1$ and $\kappa >0$ with the renormalization constant
\begin{equation*}
    C_{\epsilon}= \mathbb{E}(\vartheta_{\epsilon}\circ\xi_{\epsilon})  = \sum_{k\in \mathbb{Z}^2} \frac{|\mathscr{F}_{\mathbb{T}^2}\varphi(\epsilon k)|^2}{|k|^2+ 1} .
\end{equation*}
\end{lem}

By Bony's paraproducts, we define the following paracontrolled ansatz:
\begin{defn}\label{DefD}
Let $\alpha\in (2/3,1)$ and $\beta \in (0,\alpha]$ be such that $2\alpha+\beta >2$. We say a pair $(u, u')\in H^{\alpha}\times H^{\beta}$ is called paracontrolled by $\vartheta$ if $u^{\vartheta}:= u-u'\prec \vartheta \in H^{2\alpha}$.
\end{defn}

\subsection{The Anderson hamiltonian}

We recall some basic notations and results of the Anderson hamiltonian on $\mathbb{T}^2$ under the paracontrolled distribution framework from \cite{AC2015, GUZ2020}. The domain of the Anderson hamiltonian is given by the paracontrolled space $\mathscr{D}^{\alpha}_{\vartheta}$.

\begin{defn}\label{defParaC}
Let $\alpha \in (2/3,1)$. We define the space of distributions which are paracontrolled by $\vartheta$,
 \begin{equation}
    \mathscr{D}^{\alpha}_{\vartheta}:= \left\{ u\in H^{\alpha},  u^{\vartheta}:= u- u\prec\vartheta \in H^{2\alpha} \right\}.
\end{equation}
The space $\mathscr{D}_{\vartheta}^{\alpha}$ equipped with the inner product
\begin{equation}
    \langle u, v \rangle_{\mathscr{D}_{\vartheta}^{\alpha}}:= \langle u, v \rangle_{H^{\alpha}} + \langle u^{\vartheta}, v^{\vartheta} \rangle_{H^{2\alpha}}, \quad  u,v\in \mathscr{D}_{\vartheta}^{\alpha}.
\end{equation}   
is a Hilbert space.
\end{defn}

Then for every $u\in \mathscr{D}^{\alpha}_{\vartheta}$, by Bony's paraproduct estimates and the commutator estimate, the singular term  $u\diamond\xi$ is defined as
\begin{align}\label{Wickp1}
   u\diamond\xi = &  u\prec \xi + u\succ \xi+ u\circ\xi \nonumber \\
    = & u\prec \xi + u\succ \xi +(u\prec\vartheta)\circ\xi + R(u)\circ\xi+ u^{\vartheta}\circ\xi \nonumber \\
              = &  u\prec \xi + u\succ \xi  + C(u,\vartheta,\xi) +u (\vartheta \diamond \xi) + R(u)\circ\xi +u^{\vartheta}\circ\xi.
\end{align}
Moreover, the the Anderson hamiltonian $\mathscr{H}$ can be defined as follows 
\begin{equation}
    \mathscr{H} u:= -\Delta u +u -u\diamond \xi =  -\Delta u +u - u\prec\xi   - u\succ\xi - C(u,\vartheta,\xi) - u(\vartheta\diamond\xi) - u^{\vartheta}\circ\xi
\end{equation}

With this definition of the Anderson hamiltonian $\mathscr{H}$, the authors of \cite{AC2015} prove the following Theorem.

\begin{thm}\label{SAOah}
The Anderson hamiltonian $\mathscr{H}$ is a self-adjoint operator from $\mathscr{D}_{\vartheta}^{\alpha}$ to $L^2$ with pure point spectrum $(\Lambda_{k})_{k\geq 1}$ with the corresponding eigenfunctions $(e_{k})_{k\geq 1}$  such that
\begin{equation*}
    \Lambda_{1} < \Lambda_{2} < \Lambda_{3} < \dots \quad \text{almost surely.}
\end{equation*}
\end{thm}
The spectrum of the Anderson hamiltonian $\mathscr{H}$ has the following tail estimates, see e.g. \cite[Proposition 5.4]{AC2015}.
\begin{thm}
For all $k \in\mathbb{N}$, there exists two constants $C_{2} \geq C_{1}>0$ so that for all $x>0$ large enough,
\begin{equation}\label{TElam}
    e^{C_{2} -x} \leq \mathbb{P}( \Lambda_{k} \leq -x) \leq e^{C_{1} -x}.
\end{equation}
\end{thm}

\subsection{Localization  operators and high-low frequency decomposition}

Now  we  define  localization  operators $\mathscr{U}^{N,\gamma}_{\leq} $, $\mathscr{U}^{N,\gamma}_{>}$ for the high-low frequency decomposition. For every $f \in \mathcal{S}^{\prime}(\mathbb{T}^d)$, we define the following localization operators
\begin{equation}
    \mathscr{U}^{N,\gamma}_{\leq} f= \sum_{-1 \leq j \leq N}\Delta_{j} f + \sum_{j > N} 2^{- j \gamma} \Delta_{j} f, \quad  \mathscr{U}^{N,\gamma}_{>} f=  \sum_{j > N}(1-2^{- j \gamma})\Delta_{j} f.
\end{equation}

\begin{lem}\label{Localization}
Let $N, \gamma>0$ and $f \in \mathcal{S}^{\prime}(\mathbb{T}^d)$. Then for every $\alpha, \delta>0$ and $\beta \in [0,\gamma],$ we have
\begin{equation*}
   \left\|\mathscr{U}^{N,\gamma}_{>} f\right\|_{\mathscr{C}^{-\alpha-\delta}} \lesssim 2^{-\delta N}\|f\|_{\mathscr{C}^{-\alpha} }, \quad \|\mathscr{U}^{N,\gamma}_{\leq} f\|_{\mathscr{C}^{-\alpha+\beta}} \lesssim 2^{\beta N}\|f\|_{\mathscr{C}^{-\alpha} }.
\end{equation*}
\end{lem}
\begin{proof}
We estimate
\begin{align}
    \|\mathscr{U}^{N,\gamma}_{>} f \|_{\mathscr{C}^{-\alpha-\delta}} = & \sup_{l \geq -1} \left[ 2^{l(-\alpha - \delta)} \|\Delta_{l} (\sum_{j > N}(1-2^{- j \gamma})\Delta_{j} f) \|_{L^{\infty}}\right] \nonumber\\
    \leq & 2^{-\delta N}\sup_{l \geq -1} \left[ 2^{-\alpha l} \|\Delta_{l} f \|_{L^{\infty}}\right] \nonumber\\
    \leq & 2^{-\delta N} \|f\|_{\mathscr{C}^{-\alpha} }
\end{align}
Using same argument, we also have
\begin{align}
    \|\mathscr{U}^{N,\gamma}_{\leq} f\|_{\mathscr{C}^{-\alpha+\beta}} = & \sup_{l \geq -1} \left[ 2^{l(-\alpha + \beta)} \|\Delta_{l}(\sum_{-1 \leq j \leq N}\Delta_{j} f + \sum_{j \geq N} 2^{- j \gamma} \Delta_{j} f ) \|_{L^{\infty}}\right] \nonumber\\
     \leq & 2^{\beta N} \|f\|_{\mathscr{C}^{-\alpha} }.
\end{align}

\end{proof}

\section{Energy functional with the Anderson hamiltonian}\label{S3}

\subsection{The energy space of the Anderson hamiltonian}

In order to investigate the minimization problem of the energy functional, define a suitable energy space as the domain of energy functional is essential. If the potential in the Schr\"{o}dinger operator is regular, the classical $H^{1}$ space is an appropriate space associate with the Laplace operator. From this point of view, the original paracontrolled space $\mathscr{D}^{\alpha}_{\vartheta}$ is too regular, and inappropriate to be the energy space.

In order to find the suitable energy space of the energy functional $E(u)$ associated with the Anderson hamiltonian $\mathscr{H}$, we split the reminder $u^{\vartheta} = R(u) + u^{\sharp}$, and use localization operators $\mathscr{U}^{N,\gamma}_{\leq} $, $\mathscr{U}^{N,\gamma}_{>}$ with different parameters to split singular terms $\xi\diamond u = \Phi(u) + \Psi(u)$, where \begin{align*}
    \Phi(u) := &  u \prec \mathscr{U}^{L,\gamma_{1}}_{>} \xi + u \succ \mathscr{U}^{L,\gamma_{1}}_{>} \xi + u \succ \mathscr{U}^{K,\gamma_{2}}_{>}(\vartheta\diamond\xi) + u \prec\mathscr{U}^{K,\gamma_{2}}_{>}(\vartheta\diamond\xi), \\
    \Psi(u):= &  u^{\sharp}\circ\xi + R(u)\circ\xi + C(u,\vartheta,\xi)+ u\prec\mathscr{U}^{K,\gamma_{2}}_{\leq}(\vartheta\diamond\xi) + u\succ\mathscr{U}^{K,\gamma_{2}}_{\leq}(\vartheta\diamond\xi) + u \circ(\vartheta\diamond\xi)\\
     & + u\prec \mathscr{U}^{L,\gamma_{1}}_{\leq} \xi + u\succ \mathscr{U}^{L,\gamma_{1}}_{\leq}\xi .
\end{align*}
Here, we choose different parameters in $\mathscr{U}^{N,\gamma}_{\leq} $, $\mathscr{U}^{N,\gamma}_{>}$, so that $\Phi$ is the collection of all terms of negative regularity, and $\psi$ the collection of all other regular terms (belonging to $L^{2}$). Now we define the following modified paracontrolled space as the energy space of the Anderson hamiltonian $\mathscr{H}$. 

\begin{defn}\label{Domina12}
Let $\alpha \in (2/3,1)$, $\beta \in \{1,2\}$. We define
\begin{equation}
    \mathscr{D}_{\vartheta}^{\alpha,\beta}:= \left\{ u\in H^{\alpha}: \text{there exists }  u^{\sharp}:= u- u\prec\vartheta - R(u) \in H^{\beta} \right\},
\end{equation}
where $R(u)$ is a linear operator which given by $R(u)= \mathscr{L}^{-1}( \Phi(u) - \mathscr{L}(u \prec \vartheta))$.
The space $\mathscr{D}_{\vartheta}^{\alpha,\beta}$ is equipped with the inner product
\begin{equation}
    \langle u, v \rangle_{\mathscr{D}_{\vartheta}^{\alpha,\beta}}:= \langle u, v \rangle_{H^{\alpha}} + \langle u^{\sharp}, v^{\sharp} \rangle_{H^{\beta}}, \quad  u,v\in \mathscr{D}_{\vartheta}^{\alpha,\beta}.
\end{equation}
\end{defn}
Then for every $u\in\mathscr{D}_{\vartheta}^{\alpha,2}$, the Anderson hamiltonian $\mathscr{H}u $ can be written as
\begin{align}
    \mathscr{H}u  = &  \Phi(u) + \mathscr{L}u^{\sharp} - u\diamond\xi \nonumber\\
                 = & \mathscr{L}u^{\sharp} - R(u)\circ\xi -u^{\sharp}\circ\xi - u \prec \mathscr{U}^{L,\gamma_{1}}_{\leq} \xi - u\succ \mathscr{U}^{L,\gamma_{1}}_{\leq} \xi - u\succ \mathscr{U}^{K,\gamma_{2}}_{\leq}(\vartheta\circ\xi)  \nonumber\\
                 & - u \prec \mathscr{U}^{K,\gamma_{2}}_{\leq}(\vartheta\circ\xi)- C(u,\vartheta,\xi) -u\circ(\vartheta \diamond \xi) \nonumber \\
                 := & \mathscr{L}u^{\sharp} - \Psi(u).
\end{align}
Using the ansatz $u = u\prec\vartheta + R(u)  + u^{\sharp}$, we decompose the original equation (\ref{NAME}) into a simpler elliptic system
\begin{equation}\label{decPAM}
  \left\{
   \begin{aligned}
   & \mathscr{L}(u\prec\vartheta + R(u)) = \Phi(u) + \lambda (u\prec\vartheta + R(u)), \\
   & \mathscr{L} u^{\sharp} = a|u|^2 u + \Psi(u) + \lambda u^{\sharp},
   \end{aligned}
   \right.
\end{equation}

The renormalization argument (Lemma \ref{renormalize}) implies that $(\xi_{\epsilon},\vartheta_{\epsilon}\diamond\xi_{\epsilon}) \rightarrow (\xi, \vartheta\diamond\xi)$ in $\mathscr{C}^{-1-\kappa}\times\mathscr{C}^{-2\kappa}$ as $\epsilon \rightarrow 0$. In order to approximate the Anderson hamiltonian $\mathscr{H}$, we first define
\begin{align*}
    \Phi_{\epsilon}(u) := &  u \prec \mathscr{U}^{L,\gamma_{1}}_{>} \xi_{\epsilon} + u \succ \mathscr{U}^{L,\gamma_{1}}_{>} \xi_{\epsilon} + u \succ \mathscr{U}^{K,\gamma_{2}}_{>}(\vartheta_{\epsilon}\diamond\xi_{\epsilon}) + u \prec\mathscr{U}^{K,\gamma_{2}}_{>}(\vartheta_{\epsilon}\diamond\xi_{\epsilon}), \\
    \Psi_{\epsilon}(u):= &  u^{\sharp}\circ\xi_{\epsilon} + R(u)\circ\xi_{\epsilon} + C(u,\vartheta_{\epsilon},\xi_{\epsilon})+ u\prec\mathscr{U}^{K,\gamma_{2}}_{\leq}(\vartheta_{\epsilon}\diamond\xi_{\epsilon}) + u\succ\mathscr{U}^{K,\gamma_{2}}_{\leq}(\vartheta_{\epsilon}\diamond\xi_{\epsilon}) + u \circ(\vartheta_{\epsilon}\diamond\xi_{\epsilon})\\
     & + u\prec \mathscr{U}^{L,\gamma_{1}}_{\leq} \xi_{\epsilon} + u\succ \mathscr{U}^{L,\gamma_{1}}_{\leq}\xi_{\epsilon} .
\end{align*}
Then the approximation operator $\mathscr{H}_{\epsilon}$ is defined as
\begin{align}
    \mathscr{H}_{\epsilon}u  = & \mathscr{L}u^{\sharp} - R_{\epsilon}(u)\circ\xi -u^{\sharp}\circ\xi_{\epsilon} - u \prec \mathscr{U}^{L,\gamma_{1}}_{\leq} \xi_{\epsilon} - u\succ \mathscr{U}^{L,\gamma_{1}}_{\leq} \xi_{\epsilon} - u\succ \mathscr{U}^{K,\gamma_{2}}_{\leq}(\vartheta_{\epsilon}\circ\xi_{\epsilon})  \nonumber\\
                 & - u \prec \mathscr{U}^{K,\gamma_{2}}_{\leq}(\vartheta_{\epsilon}\circ\xi_{\epsilon})- C(u,\vartheta_{\epsilon},\xi_{\epsilon}) -u\circ(\vartheta_{\epsilon} \diamond \xi_{\epsilon}) \nonumber\\
                 = & \mathscr{L}u^{\sharp} - \Psi_{\epsilon}(u),
\end{align}
where $R_{\epsilon}$ is the bounded approximation linear operator of $R$ which given by
\begin{equation}\label{Reps}
    R_{\epsilon}(u)= \mathscr{L}^{-1}( \Phi_{\epsilon}(u) - \mathscr{L}(u \prec \vartheta_{\epsilon})).
\end{equation}

In order to obtain the properties of the space $\mathscr{D}^{\alpha,\beta}_{\vartheta}$, the following estimates for $u\prec\vartheta + R(u)$ and $R(u)$ in this subsection are essential.

\begin{lem}\label{HE11}
For every $u \in L^2$, we have
\begin{equation}
    \|u\prec\vartheta + R(u)\|_{L^2} \leq \frac{1}{2}\|u\|_{L^2}.
\end{equation}
\end{lem}
\begin{proof}
Note that the term $ u\prec\vartheta + R(u)$ satisfies $\mathscr{L}(u\prec\vartheta + R(u)) = \Phi(u)$. We control $\Phi(u)$ by Localization operators with suitable parameters. By Lemma \ref{Localization}, we employ Localization operators $\mathscr{U}^{L,\gamma_{1}}_{\leq}$ and $\mathscr{U}^{L,\gamma_{1}}_{>}$ with the parameter $L$, and $\gamma_{1} = 1+2\kappa$ such that
\begin{equation*}
    \|\mathscr{U}^{L,\gamma_{1}}_{>} \xi\|_{\mathscr{C}^{ -2+\kappa }} \lesssim 2^{-(1-2\kappa )L}\|\xi\|_{\mathscr{C}^{-1-\kappa}}.
\end{equation*}
Then by Bony's paraproduct estimate, we have
\begin{align}
    & \|u \prec \mathscr{U}^{L,\gamma_{1}}_{>} \xi\|_{H^{-2}} + \|u \succ \mathscr{U}^{L,\gamma_{1}}_{>} \xi\|_{H^{-2}}\nonumber \\
   \lesssim   & \|\mathscr{U}^{L}_{> } \xi\|_{\mathscr{C}^{-2+\kappa }} \|u\|_{L^2}  \nonumber \\
     \lesssim & 2^{-(1-2\kappa)L}\|\xi\|_{\mathscr{C}^{-1-\kappa}} \|u\|_{L^2}.
\end{align}
Similarly, we employ the Localization operators $\mathscr{U}^{K,\gamma_{2}}_{\leq}$ and $\mathscr{U}^{K,\gamma_{2}}_{>}$ with parameters $K$ and $\gamma_{2} = 3\kappa$ such that
\begin{equation*}
    \|\mathscr{U}^{K,\gamma_{2}}_{>} (\vartheta\diamond\xi)\|_{\mathscr{C}^{ -2+\kappa }} \lesssim 2^{-(2-3\kappa)K}\|\vartheta\diamond\xi\|_{\mathscr{C}^{-2\kappa}}.
\end{equation*}
Then
\begin{equation}
    \|u \prec \mathscr{U}^{K,\gamma_{2}}_{>} (\vartheta\diamond\xi)\|_{H^{-2}} + \|u \succ \mathscr{U}^{K,\gamma_{2}}_{>} (\vartheta\diamond\xi) \|_{H^{-2}} \lesssim 2^{-(2-3\kappa )K}\|\vartheta\diamond\xi\|_{\mathscr{C}^{-2\kappa}}\|u\|_{L^2}.
\end{equation}
The stochastic terms $\xi$ and $\vartheta\circ\xi$ can be controlled via choosing suitable $L,K>1$, such that
\begin{align*}
     \| u\prec\vartheta + R(u)\|_{L^2} \lesssim & \|\Phi(u)\|_{H^{-2}} \\
    \lesssim & \|u \prec \mathscr{U}_{>} \xi + u \succ \mathscr{U}_{>} \xi+ u \prec\mathscr{U}_{>}(\vartheta\diamond\xi) + u \succ \mathscr{U}_{>}(\vartheta\diamond\xi)\|_{H^{-2}} \\
    \lesssim & (2^{-(1-2\kappa)L}\|\xi\|_{\mathscr{C}^{-1-\kappa}} + 2^{-(2-3\kappa)K}\|\vartheta\circ\xi\|_{\mathscr{C}^{-2\kappa}})\|u\|_{L^2} \\
    \leq &\frac{1}{2}\|u\|_{L^2}.
\end{align*}
The proof is completed.
\end{proof}

We turn to estimate $R(u)$. 

\begin{lem}\label{RE}
Let $\alpha \in (2/3,1)$. Then for every $u \in H^{\alpha}$, we have
\begin{equation}
    \|R(u)\|_{H^{2\alpha}} \lesssim (\|\xi\|^2_{\mathscr{C}^{-1-\kappa}} + \|\vartheta\diamond\xi\|_{\mathscr{C}^{-2\kappa}}) \|u\|_{H^{\alpha}}, \quad \|R_{\epsilon}(u)\|_{H^{2\alpha}} \lesssim (\|\xi_{\epsilon}\|^2_{\mathscr{C}^{-1-\kappa}} + \|\vartheta_{\epsilon}\diamond\xi_{\epsilon}\|_{\mathscr{C}^{-2\kappa}}) \|u\|_{H^{\alpha}},
\end{equation}
and
\begin{equation}
    \|R(u)\|_{H^{\alpha}} \lesssim (\|\xi\|_{\mathscr{C}^{-1-\kappa}} + \|\vartheta\diamond\xi\|_{\mathscr{C}^{-2\kappa}}) \|u\|_{L^2}, \quad \|R_{\epsilon}(u)\|_{H^{\alpha}} \lesssim (\|\xi_{\epsilon}\|_{\mathscr{C}^{-1-\kappa}} + \|\vartheta_{\epsilon}\diamond\xi_{\epsilon}\|_{\mathscr{C}^{-2\kappa}}) \|u\|_{L^2},
\end{equation}
Furthermore, $R$ can be approximated by $R_{\epsilon}$ in the following sense
\begin{equation}
    \lim_{\epsilon\rightarrow 0}\|R - R_{\epsilon}\|_{L(H^{2\alpha }, H^{\alpha})} = 0.
\end{equation}
\end{lem}
\begin{proof}
Note that 
\begin{align*}
       \mathscr{L} R(u)  = & - u \prec \mathscr{U}^{L,\gamma_{1}}_{\leq} \xi + u \succ \mathscr{U}^{L,\gamma_{1}}_{>} \xi + u \succ \mathscr{U}^{K,\gamma_{2}}_{>}(\vartheta\diamond\xi) + u \prec\mathscr{U}^{K,\gamma_{2}}_{>}(\vartheta\diamond\xi)\\
        & - (\mathscr{L}u) \prec \vartheta + \nabla u \prec \nabla\vartheta.
\end{align*}
By paraproduct estimates, we have
\begin{align}\label{ER1}
    \|(\mathscr{L}u)\prec\vartheta\|_{H^{2\alpha-2}} & \lesssim \|\mathscr{L}u\|_{H^{\alpha-2}}\|\vartheta\|_{\mathscr{C}^{1-\kappa}} \nonumber\\
    & \lesssim \|\xi\|_{\mathscr{C}^{-1-\kappa}}\|u\|_{L^2},
\end{align}
and
\begin{align}
    \|\nabla u \prec \nabla\vartheta\|_{H^{2\alpha-2}} & \lesssim \|\nabla u\|_{H^{\alpha -1}}\|\nabla\vartheta\|_{\mathscr{C}^{-\kappa}} \nonumber \\
    & \lesssim \|\xi\|_{\mathscr{C}^{-1-\kappa}}\|u\|_{L^2}.
\end{align}
By localization operators with parameters $L,K$, and paraproduct estimates, we get
\begin{align}\label{ER3}
     & \|u\succ \mathscr{U}^{L,\gamma_{1}}_{>} \xi + u \succ \mathscr{U}^{K,\gamma_{2}}_{>}(\vartheta\diamond\xi) + u\prec \mathscr{U}^{L,\gamma_{1}}_{\leq} \xi + u \prec \mathscr{U}^{K,\gamma_{2}}_{>}(\vartheta\diamond\xi) \|_{H^{2\alpha -2} } \nonumber \\
    \lesssim & (\| \mathscr{U}^{L,\gamma_{1}}_{>} \xi\|_{\mathscr{C}^{\alpha-2}} + \| \mathscr{U}^{K,\gamma_{2}}_{>} (\vartheta\diamond\xi)\|_{\mathscr{C}^{\alpha-2}})\|u\|_{H^{\alpha} } \\
    & +(\| \mathscr{U}^{L,\gamma_{1}}_{\leq} \xi\|_{\mathscr{C}^{2\alpha-\kappa-2}}+ \| \mathscr{U}^{K,\gamma_{2}}_{>} (\vartheta\diamond\xi)\|_{\mathscr{C}^{2\alpha-\kappa-2}})\|u\|_{L^2} \nonumber \\
   \lesssim  & (2^{-(1-\kappa-\alpha)L}\|\xi\|_{\mathscr{C}^{-1-\kappa}}+2^{-(2-2\kappa-\alpha)K}\|\vartheta\diamond\xi\|_{\mathscr{C}^{-2\kappa}})\|u\|_{H^{\alpha} } \nonumber \\
   & +  (2^{(2\alpha-1 )L}\|\xi\|_{\mathscr{C}^{-1-\kappa}}+2^{(\kappa -2+2\alpha)K}\|\vartheta\diamond\xi\|_{\mathscr{C}^{-2\kappa}})\|u\|_{L^2} \nonumber\\
   \lesssim & (\|\xi\|^2_{\mathscr{C}^{-1-\kappa}} + \|\vartheta\diamond\xi\|_{\mathscr{C}^{-2\kappa}})\|u\|_{H^{\alpha}}.
\end{align}
Combining with above estimates (\ref{ER1})-(\ref{ER3}), we estimate
\begin{align*}
    \|R(u)\|_{H^{2\alpha}} \lesssim & \|\mathscr{L}R(u)\|_{H^{2\alpha-2}} \\
    \lesssim & \|\mathscr{L}u\prec\vartheta\|_{H^{2\alpha-2}}+\|\nabla u \prec \nabla\vartheta\|_{H^{2\alpha-2}}+ \|u\succ \mathscr{U}_{>} \xi + u \succ \mathscr{U}_{>}(\vartheta\diamond\xi)\|_{H^{2\alpha -2} } \\
     & +\|u\prec \mathscr{U}_{\leq} \xi  - u \prec \mathscr{U}_{>}(\vartheta\diamond\xi)\|_{H^{2\alpha -2} } \\
    \lesssim & (\|\xi\|^2_{\mathscr{C}^{-1-\kappa}} + \|\vartheta\diamond\xi\|_{\mathscr{C}^{-2\kappa}})\|u\|_{H^{\alpha}}.
\end{align*}
By same argument, we obtain same estimate for $R_{\epsilon}$. 

Similarly, we estimate
\begin{align}\label{ER12}
    \|(\mathscr{L}u)\prec\vartheta\|_{H^{\alpha-2}} \lesssim \|\xi\|_{\mathscr{C}^{-1-\kappa}}\|u\|_{L^2},
\end{align}
\begin{align}
    \|\nabla u \prec \nabla\vartheta\|_{H^{\alpha-2}} \lesssim \|\xi\|_{\mathscr{C}^{-1-\kappa}}\|u\|_{L^2},
\end{align}
and
\begin{align}\label{ER32}
     & \|u\succ \mathscr{U}^{L,\gamma_{1}}_{>} \xi + u \succ \mathscr{U}^{K,\gamma_{2}}_{>}(\vartheta\diamond\xi) + u\prec \mathscr{U}^{L,\gamma_{1}}_{\leq} \xi + u \prec \mathscr{U}^{K,\gamma_{2}}_{>}(\vartheta\diamond\xi) \|_{H^{\alpha -2} } \nonumber \\
   \lesssim  & (2^{-(1-\kappa-\alpha)L}\|\xi\|_{\mathscr{C}^{-1-\kappa}}+2^{-(2-2\kappa-\alpha)K}\|\vartheta\diamond\xi\|_{\mathscr{C}^{-2\kappa}})\|u\|_{L^2 } \nonumber \\
   & +  (2^{(\alpha-1 )L}\|\xi\|_{\mathscr{C}^{-1-\kappa}}+2^{(\kappa -2+\alpha)K}\|\vartheta\diamond\xi\|_{\mathscr{C}^{-2\kappa}})\|u\|_{L^2} \nonumber\\
   \lesssim & (\|\xi\|_{\mathscr{C}^{-1-\kappa}} + \|\vartheta\diamond\xi\|_{\mathscr{C}^{-2\kappa}})\|u\|_{L^2}.
\end{align}
Combining with above estimates (\ref{ER12})-(\ref{ER32}), it follows that
\begin{equation}
    \|R(u)\|_{H^{2\alpha}} \lesssim (\|\xi\|_{\mathscr{C}^{-1-\kappa}} + \|\vartheta\diamond\xi\|_{\mathscr{C}^{-2\kappa}})\|u\|_{L^2}.
\end{equation}
The estimate for $R_{\epsilon}$ is obtained via same argument.

Now we turn to prove the approximation result. For every $u \in H^{\alpha}$, we have
\begin{align*}
    & R(u) - R_{\epsilon}(u) \\
    = &  u \prec \mathscr{U}^{L,\gamma_{1}}_{\leq} (\xi-\xi_{\epsilon}) - u \succ \mathscr{U}^{L,\gamma_{1}}_{>} (\xi-\xi_{\epsilon}) - u \succ \mathscr{U}^{K,\gamma_{2}}_{>}(\vartheta\diamond\xi -\vartheta_{\epsilon}\diamond\xi_{\epsilon}) - u \prec\mathscr{U}^{K,\gamma_{2}}_{>}(\vartheta\diamond\xi -\vartheta_{\epsilon}\diamond\xi_{\epsilon}) \\
     & - (\mathscr{L}u) \prec (\vartheta-\vartheta_{\epsilon}) - \nabla u \prec \nabla (\vartheta-\vartheta_{\epsilon}).
\end{align*}
Since $(\xi_{\epsilon},\vartheta_{\epsilon}\diamond\xi_{\epsilon}) \rightarrow (\xi, \vartheta\diamond\xi)$ in $\mathscr{C}^{-1-\kappa}\times\mathscr{C}^{-2\kappa}$ as $\epsilon \rightarrow 0$, similar with estimate (\ref{ER1})-(\ref{ER3}), it follows that $\lim_{\epsilon\rightarrow 0}\|R - R_{\epsilon}\|_{L(H^{2\alpha }, H^{\alpha})} = 0.$ The proof is completed.
\end{proof}

We define the linear bounded operator $\Pi u = u^{\sharp} = u - u\prec\vartheta - R(u)$ for every $u \in \mathscr{D}_{\vartheta}^{\alpha,1}$. Similar with $R_{\epsilon}$, we also introduce the approximation operator $\Pi_{\epsilon}$ as $\Pi_{\epsilon}u = u- u\prec \vartheta_{\epsilon} - R_{\epsilon}(u)$.
The renormalization Lemma \ref{renormalize} implies that
$(\xi_{\epsilon},\vartheta_{\epsilon}\diamond\xi_{\epsilon}) \rightarrow (\xi, \vartheta\diamond\xi)$ in $\mathscr{C}^{-1-\kappa}\times\mathscr{C}^{-2\kappa}$ as $\epsilon \rightarrow 0$. 
By the choosing of $L,K$ in Lemma \ref{HE11}, we show that $\Pi$ is bijective in next Lemma.

\begin{lem}\label{Gam}
The bounded linear operators $\Pi$ and $\Pi_{\epsilon}$ are injective. The inverse operators $\Gamma$ and $\Gamma_{\epsilon}$ of  $\Pi$ and $\Pi_{\epsilon}$ satisfies
\begin{equation}\label{EGamma}
    \frac{1}{2}\|\Gamma u^{\sharp}\|_{L^2} \leq \|u^{\sharp}\|_{L^2}, \quad \frac{1}{2}\|\Gamma_{\epsilon} u^{\sharp}\|_{L^2} \leq \|u^{\sharp}\|_{L^2}.
\end{equation}
Furthermore, $\Gamma$ can be approximated by $\Gamma_{\epsilon}$ in the following sense
\begin{equation}\label{GAE}
    \lim_{\epsilon\rightarrow 0}\|\Gamma - \Gamma_{\epsilon}\|_{L(\mathscr{D}_{\vartheta}^{\alpha,1}, H^{1})} = 0.
\end{equation}
\end{lem}
\begin{proof}
By Lemma \ref{HE11}, for every $u_{1}, u_{2}\in \mathscr{D}_{\vartheta}^{\alpha,1}$, we have
\begin{align}
    \|u_{1} - u_{2}\|_{L^2} = & \|u_{1}\prec \vartheta + R(u_{1}) + u^{\sharp}_{1} - u_{1}\prec \vartheta - R(u_{2}) - u^{\sharp}_{2} \|_{L^2} \nonumber\\
     \leq & \|u_{1}\prec \vartheta + R(u_{1}) - u_{1}\prec \vartheta - R(u_{2}) \|_{L^2} + \|u^{\sharp}_{1} - u^{\sharp}_{2} \|_{L^2} \nonumber \\
     \leq & \frac{1}{2}\|u_{1} - u_{2}\|_{L^2} + \|u^{\sharp}_{1} - u^{\sharp}_{2} \|_{L^2}.
\end{align}
Thus
\begin{equation}
    \frac{1}{2}\|u_{1} - u_{2}\|_{L^2} \leq \|u^{\sharp}_{1} - u^{\sharp}_{2} \|_{L^2} = \| \Pi u_{1} - \Pi u_{2} \|_{L^2}.
\end{equation}
It implies that $\Pi$ is injective. Moreover, the operator $\Pi$ has the inverse operator $\Gamma$ such that for each $u \in \mathscr{D}_{\vartheta}^{\alpha,1}$,
\begin{equation}
    u = \Gamma u^{\sharp} = (\Gamma u^{\sharp})\prec\vartheta + R(\Gamma u^{\sharp} ) + u^{\sharp}.
\end{equation}
Moreover, it follows that
\begin{equation*}
    \|u\|_{L^{2}} = \|\Gamma u^{\sharp}\|_{L^2} \leq \frac{1}{2}\|u\|_{L^2}+\|u^{\sharp}\|_{L^2}.
\end{equation*}
Thus 
\begin{equation}
    \frac{1}{2}\|\Gamma u^{\sharp}\|_{L^2} \leq \|u^{\sharp}\|_{L^2}.
\end{equation}
The same argument show that the approximated operator $\Pi_{\epsilon}$ is also injective with inverse operator $\Gamma_{\epsilon}$ which satisfies that for each $u \in \mathscr{D}_{\vartheta}^{\alpha,1}$,
\begin{equation}
    u = \Gamma_{\epsilon} u^{\sharp} =  (\Gamma_{\epsilon} u^{\sharp})\prec\vartheta_{\epsilon} + R_{\epsilon}(\Gamma_{\epsilon} u^{\sharp}) + u^{\sharp},
\end{equation}
and 
\begin{equation}
    \frac{1}{2}\|\Gamma_{\epsilon} u^{\sharp}\|_{L^2} \leq \|u^{\sharp}\|_{L^2}.
\end{equation}
Now we turn to prove the approximation result. By Lemma \ref{HE11} and Lemma \ref{RE}, for each $u \in \mathscr{D}_{\vartheta}^{\alpha,1}$ we estimate
\begin{align*}
     & \|\Gamma u^{\sharp}- \Gamma_{\epsilon} u^{\sharp}\|_{L^2} \\
    = & \|(\Gamma u^{\sharp})\prec\vartheta + R(\Gamma u^{\sharp}) - (\Gamma_{\epsilon} u^{\sharp})\prec\vartheta_{\epsilon} - R_{\epsilon}(\Gamma_{\epsilon} u^{\sharp})\|_{L^2} \\
    \leq & \|(\Gamma u^{\sharp}- \Gamma_{\epsilon} u^{\sharp})\prec\vartheta + R(\Gamma u^{\sharp}- \Gamma_{\epsilon} u^{\sharp})\|_{L^2}+\|(\Gamma_{\epsilon} u^{\sharp})\prec(\vartheta -\vartheta_{\epsilon}) \|_{L^2}+ \| R(\Gamma_{\epsilon} u^{\sharp})  - R_{\epsilon}(\Gamma_{\epsilon} u^{\sharp})\|_{L^2} \\
    \leq & \frac{1}{2}\|\Gamma u^{\sharp}- \Gamma_{\epsilon} u^{\sharp}\|_{L^2} + C'(\|\vartheta_{\epsilon} - \vartheta\|_{\mathscr{C}^{1-\kappa}} +\|\xi_{\epsilon}- \xi \|_{\mathscr{C}^{-1-\kappa}}+ \|\vartheta_{\epsilon}\diamond\xi_{\epsilon}- \vartheta\diamond\xi \|_{\mathscr{C}^{-2\kappa}})\|\Gamma_{\epsilon} u^{\sharp}\|_{L^2},
\end{align*}
for some constant $C'>0$. Since $\lim_{\epsilon\rightarrow 0}\|\xi_{\epsilon}- \xi \|_{\mathscr{C}^{-1-\kappa}} = 0$, $\lim_{\epsilon\rightarrow 0}\|\vartheta_{\epsilon}\diamond\xi_{\epsilon}- \vartheta\diamond\xi \|_{\mathscr{C}^{-2\kappa}} = 0$, we have
\begin{equation}
    \lim_{\epsilon\rightarrow 0}\|\Gamma u^{\sharp}- \Gamma_{\epsilon} u^{\sharp}\|_{L^2} = 0.
\end{equation}
Then by Lemma \ref{RE}, it follows that
\begin{align*}
     & \|\Gamma u^{\sharp}- \Gamma_{\epsilon} u^{\sharp}\|_{\mathscr{D}_{\vartheta}^{\alpha,1}} \\
    = & \|(\Gamma u^{\sharp})\prec\vartheta + R(\Gamma u^{\sharp}) - (\Gamma_{\epsilon} u^{\sharp})\prec\vartheta_{\epsilon} - R_{\epsilon}(\Gamma_{\epsilon} u^{\sharp})\|_{H^{\alpha}} \\
    \leq & \|(\Gamma u^{\sharp}- \Gamma_{\epsilon} u^{\sharp})\prec\vartheta \|_{H^{\alpha}} + \| R(\Gamma u^{\sharp}- \Gamma_{\epsilon} u^{\sharp})\|_{H^{\alpha}}+\|(\Gamma_{\epsilon} u^{\sharp})\prec(\vartheta -\vartheta_{\epsilon}) \|_{H^{\alpha}}+ \| R(\Gamma_{\epsilon} u^{\sharp})  - R_{\epsilon}(\Gamma_{\epsilon} u^{\sharp})\|_{H^{\alpha}} \\
    \lesssim & \|\Gamma u^{\sharp}- \Gamma_{\epsilon} u^{\sharp}\|_{L^2}\|\vartheta\|_{\mathscr{C}^{1-\kappa}} + C'(\|\vartheta_{\epsilon} - \vartheta\|_{\mathscr{C}^{1-\kappa}} + \|\xi_{\epsilon}- \xi \|_{\mathscr{C}^{-1-\kappa}}+ \|\vartheta_{\epsilon}\diamond\xi_{\epsilon}- \vartheta\diamond\xi \|_{\mathscr{C}^{-2\kappa}})\|\Gamma_{\epsilon} u^{\sharp}\|_{H^{\alpha}},
\end{align*}
for some constant $C'>0$. Since $\lim_{\epsilon\rightarrow 0}\|\xi_{\epsilon}- \xi \|_{\mathscr{C}^{-1-\kappa}} = 0$, $\lim_{\epsilon\rightarrow 0}\|\vartheta_{\epsilon}\diamond\xi_{\epsilon}- \vartheta\diamond\xi \|_{\mathscr{C}^{-2\kappa}} = 0$, the map $\Gamma$ can be approximated by $\Gamma_{\epsilon}$ as
\begin{equation}
    \lim_{\epsilon\rightarrow 0}\|\Gamma - \Gamma_{\epsilon}\|_{L(\mathscr{D}_{\vartheta}^{\alpha,1}, H^{1})} = 0.
\end{equation}
This completes the proof.

\end{proof}

Using linear operators $\Gamma$ and $\Gamma_{\epsilon}$, we have the following result for space $\mathscr{D}_{\vartheta}^{\alpha,\beta}$.
\begin{thm}\label{Hr35}
Let $ 0 \leq \alpha < 1$. The space $\mathscr{D}_{\vartheta}^{\alpha,\beta}$ $(\beta\in \{1,2\})$ is a Hilbert space. Moreover, $ \mathscr{D}_{\vartheta}^{\alpha,\beta}$ is dense in $H^{\alpha}$.
\end{thm}

\begin{proof}
Since $\langle \cdot, \cdot \rangle_{ \mathscr{D}_{\vartheta}^{\alpha,\beta}}$ is an inner product, in order to prove that the space $\mathscr{D}_{\vartheta}^{\alpha,\beta}$ is a Hilbert space, it remains to show that the space $\mathscr{D}_{\vartheta}^{\alpha,\beta}$ is complete. Assume that $(u_n)_{n\geq 1}$ is a Cauchy sequence in $ \mathscr{D}_{\vartheta}^{\alpha,\beta}$. Then there exists $u \in H^{\alpha} $, $u^{\sharp} \in H^\beta$ such that
\begin{equation*}
     \lim_{n\rightarrow \infty}\|u_n - u\|_{H^{\alpha}}=0, \quad \lim_{n\rightarrow \infty}\|u^{\sharp}_{n}- u^{\sharp}\|_{H^{\beta}}=0.
\end{equation*}
By $(\ref{GAE})$, it follows that
\begin{equation*}
    \lim_{n\rightarrow \infty}\|u_n\prec\vartheta +R(u_n)- u\prec\vartheta  - R(u)\|_{H^{\alpha}}=0
\end{equation*}
It implies that $u^{\sharp} = u- u\prec\vartheta - R(u)$, and the limit $u: = \lim_{n \to \infty} u_{n} \in \mathscr{D}_{\vartheta}^{\alpha,\beta} $. Thus the space $\mathscr{D}_{\vartheta}^{\alpha,\beta}$ is complete.

Now we turn to prove that $\mathscr{D}_{\vartheta}^{\alpha,\beta}$ is dense in $H^{\alpha}$. For every $f \in H^{\alpha}$, by the paraproduct estimates and Lemma \ref{RE}, we have $f\prec \vartheta, R(f) \in H^{\alpha}$.
Since $H^{\beta}$ is dense in $H^{\alpha}$, we can choose a sequence $\{f^{\sharp}_{n}\} \subset H^{\beta}$ so that
\begin{equation}
    f^{\sharp}_{n} \rightarrow f-f\prec \vartheta- R(f)\text{ in } H^{\alpha} \text{ as } n\rightarrow \infty.
\end{equation}
Moreover, using the operator $\Gamma$ we can define $f_n := \Gamma f^{\sharp}_{n}) \in \mathscr{D}_{\vartheta}^{\alpha,\beta}$ so that
\begin{equation}
    f_{n} \rightarrow f \text{ in } H^{\alpha} \text{ as } n\rightarrow \infty.
\end{equation}
The proof is completed.
\end{proof}

\subsection{The energy functional and its Fr\'{e}chet derivative}

In this subsection, we define the energy functional $E(u)$ by the quadratic form of the Anderson hamiltonian $\mathscr{H}$, and derive the Fr\'{e}chet derivative of the energy functional of $E(u)$ on its domain $\mathscr{D}_{\vartheta}^{\alpha,1}$.

For every $v,u \in \mathscr{D}_{\vartheta}^{\alpha,2}$, we define the bilinear form $B_{\mathscr{H}}(v,u) = \langle v, \mathscr{H}u \rangle$, and estimate $B_{\mathscr{H}}(v,u)$ with norm $\|\cdot\|_{\mathscr{D}_{\vartheta}^{\alpha,1}}$ as follows.

\begin{thm}\label{ThmBh}
For every $v, u\in \mathscr{D}_{\vartheta}^{\alpha,2}$, we have
\begin{equation}\label{Tbh1}
    B_{\mathscr{H}}(v,u) \lesssim \|v\|_{\mathscr{D}_{\vartheta}^{\alpha,1}}\|u\|_{\mathscr{D}_{\vartheta}^{\alpha,1}}.
\end{equation}
and
\begin{equation}\label{Tbh2}
\frac{1}{2}\|u\|^2_{\mathscr{D}_{\vartheta}^{\alpha,1}}- C_{\xi} \|u\|^2_{L^2} \leq B_{\mathscr{H}}(u,u),
\end{equation}
where
\begin{equation}
    C_{\xi} = C(1+\|\xi\|_{\mathscr{C}^{-1-\kappa}} + \|\vartheta\diamond\xi\|_{\mathscr{C}^{-2\kappa}})^{4}.
\end{equation}
\end{thm}
\begin{proof}
For every $v, u\in \mathscr{D}_{\vartheta}^{\alpha,2}$, by integration by part we have
\begin{align}\label{BH1}
    \langle v, \mathscr{H}u \rangle = &  \langle v, \mathscr{L} u^{\sharp} \rangle -  \langle v , \Psi(u) \rangle \nonumber \\
     = & \langle  v^{\sharp}, \mathscr{L} u^{\sharp} \rangle + \langle  v\prec\vartheta + R(v), \mathscr{L} u^{\sharp} \rangle - D(v,\xi,u^{\sharp}) - \langle v\prec\xi, u^{\sharp} \rangle - \langle v , \Psi(u) - u^{\sharp}\circ \xi \rangle   \nonumber \\
    = & \langle \nabla v^{\sharp}, \nabla u^{\sharp} \rangle + \langle v^{\sharp},  u^{\sharp} \rangle + \langle \mathscr{L} R(v), u^{\sharp}\rangle  -  D(v,\xi,u^{\sharp}) \nonumber \\
     & + \langle (\mathscr{L}(v\prec\vartheta) - v \prec \xi), u^{\sharp} \rangle  - \langle v , \Psi(u) - u^{\sharp}\circ \xi \rangle,
\end{align}
where
\begin{equation}
    D(v,\xi,u^{\sharp}) = \langle v, u^{\sharp}\circ\xi \rangle - \langle v\prec\xi, u^{\sharp}\rangle.
\end{equation}
Lemma \ref{RE} implies that
\begin{equation}\label{451}
    \langle \mathscr{L} R(v), u^{\sharp}\rangle  \lesssim \|R(v)\|_{H^{2\alpha-2}}\|u^{\sharp}\|_{H^{\alpha}} \lesssim (\|\xi\|^2_{\mathscr{C}^{-1-\kappa}} + \|\vartheta\diamond\xi\|_{\mathscr{C}^{-2\kappa}}) \|v\|_{H^{\alpha}}\|u^{\sharp}\|_{H^{\alpha}}.
\end{equation}
By Lemma \ref{Dm}, we obtain
\begin{align}\label{452}
    D(v,\xi,u^{\sharp})\lesssim  \|\xi\|_{\mathscr{C}^{-1-\kappa}}\|v\|_{H^{\alpha}}\|u^{\sharp}\|_{H^{\alpha}}.
\end{align}
Using Lemma \ref{CE}, we have
\begin{equation}\label{453}
    \langle (\mathscr{L}(v\prec\vartheta) - v \prec \xi), u^{\sharp} \rangle \leq \|\mathscr{L}(v\prec\vartheta) - v \prec \xi\|_{L^2}\|u^{\sharp}\|_{L^2} \lesssim \|\xi\|_{\mathscr{C}^{-1-\kappa}}\|v\|_{H^{\alpha}}\|u^{\sharp}\|_{H^{\alpha}}
\end{equation}
By paraproduct estimates, commutator estimate, and Lemma \ref{RE}, we estimate
\begin{align}\label{Epsi}
    &\|\Psi(u) - u^{\sharp} \circ\xi \|_{L^2} \nonumber \\
    \leq & \|R(u)\circ\xi\|_{L^2} + \|C(u,\vartheta,\xi)\|_{L^2} + \|u\circ (\vartheta\diamond\xi)\|_{L^2} \nonumber \\
    & + \|u \prec \mathscr{U}^{L,\gamma_{1}}_{\leq} \xi + u\succ \mathscr{U}^{L,\gamma_{1}}_{\leq} \xi\|_{L^2} + \|u\prec \mathscr{U}^{K,\gamma_{2}}_{\leq}(\vartheta\circ\xi) + u \succ \mathscr{U}^{K,\gamma_{2}}_{\leq}(\vartheta\circ\xi)\|_{L^2} \nonumber \\
    \lesssim & (\|\xi\|^2_{\mathscr{C}^{-1-\kappa}} + \|\xi\|_{\mathscr{C}^{-1-\kappa}}\|\vartheta\diamond\xi\|_{\mathscr{C}^{-2\kappa}})\|u\|_{H^{\alpha}}  + \|\xi\|^2_{\mathscr{C}^{-1-\kappa}}\|u\|_{ H^{\alpha}} \nonumber \\
    & + \|\vartheta\diamond\xi\|_{\mathscr{C}^{-2\kappa}}\|u\|_{H^{\alpha}} + 2^{ (\alpha - 1 + 2\kappa)L}\|\xi\|_{\mathscr{C}^{-1-\kappa}}\|u\|_{H^{\alpha}} + 2^{3\kappa K}\|\vartheta\diamond\xi\|_{\mathscr{C}^{-2\kappa}}\|u\|_{H^{\alpha}} \nonumber\\
    \lesssim & (1+\|\xi\|^2_{\mathscr{C}^{-1-\kappa}} + \|\vartheta\diamond\xi\|^2_{\mathscr{C}^{-2\kappa}}) \|u\|_{H^{\alpha}}.
\end{align}
Thus
\begin{equation}\label{454}
    \langle v , \Psi(u) - u^{\sharp}\circ \xi\rangle \leq \|v\|_{L^2}\|\Psi(u) - u^{\sharp}\circ \xi\|_{L^2} \lesssim  (1 + \|\xi\|^2_{\mathscr{C}^{-1-\kappa}} + \|\vartheta\diamond\xi\|^2_{\mathscr{C}^{-2\kappa}})\|v\|_{L^2}\|u\|_{H^{\alpha}}.
\end{equation}

Combining above estimates (\ref{451})-(\ref{454}), we conclude that
\begin{align}
    & \langle v, \mathscr{H}u \rangle - \langle \nabla v^{\sharp}, \nabla u^{\sharp} \rangle - \langle v^{\sharp},  u^{\sharp} \rangle \nonumber \\
    \lesssim & (1+\|\xi\|_{\mathscr{C}^{-1-\kappa}} + \|\vartheta\diamond\xi\|_{\mathscr{C}^{-2\kappa}})^2(\|v^{\sharp}\|_{H^{\alpha}}+ \|v\|_{H^{\alpha}})(\|u^{\sharp}\|_{H^{\alpha}}+ \|u\|_{H^{\alpha}}).
\end{align}
Thus
\begin{equation*}
    \langle v, \mathscr{H}u \rangle \lesssim \|v\|_{\mathscr{D}_{\vartheta}^{\alpha,1}}\|u\|_{\mathscr{D}_{\vartheta}^{\alpha,1}}.
\end{equation*}
Now we take $u=v$. Then by the operator $\Pi$ and interpolation inequality, it follows that
\begin{align*}
      & \langle u, \mathscr{H}u \rangle - \| \nabla u^{\sharp} \|^2_{L^2} - \|u\|^2_{H^{\alpha}} \\
      \lesssim & (1+\|\xi\|_{\mathscr{C}^{-1-\kappa}} + \|\vartheta\diamond\xi\|_{\mathscr{C}^{-2\kappa}})^2 (\|u^{\sharp}\|^2_{H^{\alpha}} + \| u\|^2_{H^{\alpha}} )\nonumber \\
     \lesssim & (1+\|\xi\|_{\mathscr{C}^{-1-\kappa}} + \|\vartheta\diamond\xi\|_{\mathscr{C}^{-2\kappa}})^2 \|u^{\sharp}\|^2_{H^{\alpha}} \nonumber \\
    \lesssim & \delta \|\nabla u^{\sharp}\|^2_{L^2} + C_{\delta}(1+\|\xi\|_{\mathscr{C}^{-1-\kappa}} + \|\vartheta\diamond\xi\|_{\mathscr{C}^{-2\kappa}})^{4}  \|u^{\sharp}\|^2_{L^2}) \nonumber\\
    \lesssim & \delta \|\nabla u^{\sharp}\|^2_{L^2} + C_{\delta}(1+\|\xi\|_{\mathscr{C}^{-1-\kappa}} + \|\vartheta\diamond\xi\|_{\mathscr{C}^{-2\kappa}})^{4}  \|u\|^2_{L^2}).
\end{align*}
Thus
\begin{equation}
    \| \nabla u^{\sharp} \|^2_{L^2} + \|u\|^2_{H^{\alpha}} -  \delta \|\nabla u^{\sharp}\|^2_{L^2} - C_{\delta}(1+\|\xi\|_{\mathscr{C}^{-1-\kappa}} + \|\vartheta\diamond\xi\|_{\mathscr{C}^{-2\kappa}})^{4}  \|u\|^2_{L^2}) \leq \langle u, \mathscr{H}u \rangle.
\end{equation}
Choosing $\delta$ small enough to absorb $\|\nabla u\|^{2}_{L^2}$ in right hand side, we obtain
\begin{equation}
\frac{1}{2}\|u\|^2_{\mathscr{D}_{\vartheta}^{\alpha,1}}- C_{\xi}\|u\|^2_{L^2} \leq \langle u, \mathscr{H}u \rangle.
\end{equation}
where 
\begin{equation}
    C_{\xi} = C(1+\|\xi\|_{\mathscr{C}^{-1-\kappa}} + \|\vartheta\diamond\xi\|_{\mathscr{C}^{-2\kappa}})^{4}.
\end{equation}
\end{proof}

By Theorem \ref{ThmBh}, we can define $B_{\mathscr{H}}(v,u)$ as the quadratic form given by the Anderson hamiltonian $\mathscr{H}$.
\begin{defn}\label{thmBh}
For every $v, u\in \mathscr{D}_{\vartheta}^{\alpha,1}$, we define the bilinear form $B_{\mathscr{H}}(v,u)$ as \begin{align}
    B_{\mathscr{H}}(v,u) = & \langle \nabla v^{\sharp}, \nabla u^{\sharp} \rangle + \langle v^{\sharp},  u^{\sharp} \rangle + \langle \mathscr{L} R(v), u^{\sharp}\rangle  -  D(v,\xi,u^{\sharp}) \nonumber \\
     & + \langle (\mathscr{L}(v\prec\vartheta) - v \prec \xi), u^{\sharp} \rangle  - \langle v , \Psi(u) - u^{\sharp}\circ\xi \rangle.
\end{align}
\end{defn}

When $v, u\in \mathscr{D}_{\vartheta}^{\alpha,2}$, by (\ref{BH1}) it follows that $B_{\mathscr{H}}(v,u) = \langle v, \mathscr{H}u\rangle$. The $\Gamma$ map implies that  $\mathscr{D}_{\vartheta}^{\alpha,2}$ is dense in $\mathscr{D}_{\vartheta}^{\alpha,1}$. Thus by a density argument, the estimates (\ref{Tbh1}) and (\ref{Tbh2}) still hold, and the quadratic form $ B_{\mathscr{H}}(v,u)$ is closed, symmetric, and semi-bounded from $\mathscr{D}_{\vartheta}^{\alpha,1}\times\mathscr{D}_{\vartheta}^{\alpha,1}$ to $\mathbb{R}$.

Now we state the self-adjointness of the operator $\mathscr{H}+ C_{\xi} I$. It also implies the self-adjointness of the Anderson Hanmitonian $\mathscr{H}$. We remark that  similar result also appear in other papers, see e.g. \cite[Theorem 1.1]{AC2015} or \cite[Lemma 2.29]{GUZ2020}. However, we use a different high-low frequency decomposition, and the domain $\mathscr{D}_{\vartheta}^{\alpha,\beta}$ is little different with previous papers. Thus we still provide a self-contained proof for completeness in Appendix B.

\begin{thm}\label{sfH}
The operator $\mathscr{H}+ C_{\xi} I$ is a positive self-adjoint operator from $\mathscr{D}_{\vartheta}^{\alpha,1}$ to $L^2$, where $C_{\xi}$ is the positive constant given in Theorem \ref{ThmBh}.
\end{thm}

Now we define the energy functional
\begin{equation}\label{EFu}
    E(u) = \frac{1}{2}B_{\mathscr{H}}(u,u) + \frac{a}{4}\int_{\mathbb{T}^2} |u(x)|^4 dx, \quad u \in \mathscr{D}_{\vartheta}^{\alpha,1}.
\end{equation}

Recall that the energy functional $E(u)$ is Fr\'{e}chet differentiable at $u \in \mathscr{D}_{\vartheta}^{\alpha,1}$ if there exists a continuous linear functional $\mathscr{D}E)(u): \mathscr{D}_{\vartheta}^{\alpha,1}\rightarrow R$ such that for any $\epsilon>0$, there exists a $\delta = \delta(\epsilon,u)>0$ so that
\begin{equation*}
    |E(u+v)-E(u)-\mathscr{D}E)(u)(v)| < \epsilon\|v\|_{\mathscr{D}_{\vartheta}^{\alpha,1}} \quad \text{for every } \|v\|_{\mathscr{D}_{\vartheta}^{\alpha,1}} <\delta .
\end{equation*}
We call $\mathscr{D}E)(u)$ is the Fr\'{e}chet derivative of the energy functional $E(u)$.

Now we consider the Fr\'{e}chet derivative of the energy functional $E(u)$.

\begin{thm}\label{FDEA}
The energy functional $E(u)$ is well defined and of class $C^1$ on $\mathscr{D}_{\vartheta}^{\alpha,1}$. Its Fr\'{e}chet derivative at point $u$ is given by
\begin{equation}\label{DE}
    \mathscr{D}E(u)(v)= B_{\mathscr{H}}(v,u) + a\int_{\mathbb{T}^2} v|u|^2 u dx  \quad \text{for every } v \in \mathscr{D}_{\vartheta}^{\alpha,1}.
\end{equation}
\end{thm}

\begin{proof}
By Sobolev embedding theorem, for every $p\geq 1$ we can choose $\alpha $ large enough such that $\mathscr{D}_{\vartheta}^{\alpha,1} \hookrightarrow L^p$. Thus the energy functional $E(u)$ is meaningful and continuous on $\mathscr{D}_{\vartheta}^{\alpha,1}$.

Now we show that $E$ is differentiable in the sense of Fr\'{e}chet and that its Fr\'{e}chet derivative is given by (\ref{DE}). Since $\mathscr{D}_{\vartheta}^{\alpha,1} \hookrightarrow L^{p}$, the nonlinear term in (\ref{EFu}) is well-studied in Calculus of Variations (see e.g. \cite[Corollary 1.1.7]{C2005}). The nonlinear term is $C^1$ from $\mathscr{D}_{\vartheta}^{\alpha,1}$ to $\mathbb{R}$, and its Fr\'{e}chet derivative at $u \in \mathscr{D}_{\vartheta}^{\alpha,1}$ is
\begin{equation}\label{DE1}
    \mathscr{D}(\frac{a}{4}\int_{\mathbb{T}^2} |u(x)|^4 dx)(v) = a\int_{\mathbb{T}^2} v(x)|u(x)|^2 u(x) dx, \quad v \in \mathscr{D}_{\vartheta}^{\alpha,1}.
\end{equation}
It is enough to show that the map $u \rightarrow B_{\mathscr{H}}(u,u)$ is differentiable. For all $u,v\in \mathscr{D}_{\vartheta}^{\alpha,1}$, by Theorem \ref{ThmBh} it holds that
\begin{equation*}
      | B_{\mathscr{H}}(u+v, u+v)  - B_{\mathscr{H}}(u, u) - 2B_{\mathscr{H}}(v, u)|
    =  |B_{\mathscr{H}}(v, v) |
    \lesssim \|v\|^2_{\mathscr{D}_{\vartheta}^{\alpha,1}}.
\end{equation*}
Thus
\begin{equation}\label{DE2}
   | B_{\mathscr{H}}(u+v, u+v)  - B_{\mathscr{H}}(u, u) - 2B_{\mathscr{H}}(v, u)| =0 \quad \text{as } \|v\|_{\mathscr{D}_{\vartheta}^{\alpha,1}}\rightarrow 0.
\end{equation}
Combining with (\ref{DE1}) and (\ref{DE2}), we obtain that
\begin{equation*}
     \mathscr{D}E(u)(v)=B_{\mathscr{H}}(v,u) + a\int_{\mathbb{T}^2} v(x)|u(x)|^2 u dx, \quad v \in \mathscr{D}_{\vartheta}^{\alpha,1}.
\end{equation*}
Moreover, the Fr\'{e}chet derivative $u\rightarrow \mathscr{D}E(u)$ is continuous from $\mathscr{D}_{\vartheta}^{\alpha,1}$ to its dual $(\mathscr{D}_{\vartheta}^{\alpha,1})^{\ast} = \mathscr{D}_{\vartheta}^{\alpha,1}$. This finishes the proof of theorem.
\end{proof}

By the Fr\'{e}chet derivative of $E(u)$, we define the weak solution of singular stochastic partial differential equation (\ref{NAME}) as follows.
\begin{defn}
We say $u \in \mathscr{D}_{\vartheta}^{\alpha,1}$ is a weak solution of singular stochastic partial differential equation (\ref{NAME}) if
\begin{equation*}
     B_{\mathscr{H}}(v,u) + a\int_{\mathbb{T}^2} v(x)|u(x)|^2u(x) dx = \lambda \langle v,u \rangle, \quad v \in \mathscr{D}_{\vartheta}^{\alpha,1}.
\end{equation*}
\end{defn}

\subsection{Existence of minimizer}\label{PfEx}

In this subsection, we show the existence of minimizer by direct method in the calculus of variations.

\begin{proof}(Proof of Theorem \ref{Ex})
By Theorem \ref{ThmBh}, the energy functional $E(u)$ satisfies
\begin{align}
    E(u) = & B_{\mathscr{H}}(u,u) + \frac{a}{4}\int_{\mathbb{T}^2} |u(x)|^4 dx \nonumber \\
        \geq & c\|u\|^2_{\mathscr{D}_{\vartheta}^{\alpha,1}} -C_{\xi} \|u\|^2_{L^2} + \frac{a}{4}\|u\|^4_{L^4}  \nonumber\\
        \geq & c(\|u\|^2_{H^{\alpha}}+\|u^{\sharp}\|^2_{H^1})-C_{\xi}  \|u\|^2_{L^2} + \frac{a}{4}\|u\|^4_{L^4} .
\end{align}
Then we choose $\alpha$ large enough so that $H^{\alpha-\delta} \hookrightarrow L^{4}$ for some small $\delta>0$. Thus by interpolation inequality, we have
\begin{equation}
    E(u) \geq c \|u^{\sharp}\|^2_{H^1} - C'
\end{equation}
for some constant $C'>0$. Thus the energy functional $E(u)$ is bounded from below and coercive on $\mathscr{D}_{\vartheta}^{\alpha,1}$.

Now we turn to prove the weak lower-semicontinuity of $E(u)$. By Theorem \ref{ThmBh}, the map $u \rightarrow B_{\mathscr{H}}(u,u)+ C_{\xi}\|u\|^2_{L^2}$ is a convex functional from $\mathscr{D}_{\vartheta}^{\alpha,1}$ to $\mathbb{R}$. So for every $u_n, u \in \mathscr{D}_{\vartheta}^{\alpha,1}$, the energy functional $E(u_n)$ satisfies
\begin{align*}
    E(u_n) = & \frac{1}{2}B_{\mathscr{H}}(u_n, u_n) + \frac{a}{4}\int_{\mathbb{T}^2}|u_k(x)|^4 dx \\
           \geq & \frac{1}{2}(B_{\mathscr{H}}(u,u) + \lambda\|u\|^2_{L^2}) + B_{\mathscr{H}}(u_n-u,u) \\
           & \quad +\lambda \langle u_n -u, u \rangle - \lambda\|u_n\|^2_{L^2} + \frac{a}{4}\|u_{k}\|^4_{L^4}.
\end{align*}
For any sequence $\{u_n\}^{\infty}_{n=1}$ with $u_n \rightharpoonup u$ weakly in $\mathscr{D}_{\vartheta}^{\alpha,1}$, we have
\begin{equation}\label{WLC1}
    \lim_{n\rightarrow \infty} B_{\mathscr{H}}(u_n-u,u) + \lambda \langle u_n -u, u \rangle = 0.
\end{equation}
Since the weakly convergent sequence $\{u_n\}^{\infty}_{n=1}$ is bounded in $\mathscr{D}_{\vartheta}^{\alpha,1}$, the Sobolev compact embedding theorem implies that $u_n \rightarrow u$ strongly in $L^4$ as $n\rightarrow\infty$.
Thus
\begin{equation}\label{WLC2}
    \lim_{n\rightarrow \infty} (- \lambda\|u_n\|^2_{L^2}+ \frac{a}{4}\|u_{k}\|^4_{L^4}) = - \lambda\|u\|^2_{L^2} + \frac{a}{4}\|u_{k}\|^4_{L^4}.
\end{equation}
From (\ref{WLC1}) and (\ref{WLC2}), we get
\begin{equation}
    E(u) \leq \liminf_{n \rightarrow \infty} E(u_n).
\end{equation}
It implies the weak lower-semicontinuity of $E(u)$

Since $E(u)$ is bounded from below, there exists a minimizing sequence $\{u_n\}^{\infty}_{n=1} \subset \mathscr{D}_{\vartheta}^{\alpha,1}$ satisfying $\|u_{n}\|^2_{L^2} =1$ and
\begin{equation*}
    \lim_{n\rightarrow \infty}E(u_k) = \inf_{w \in \mathscr{D}_{\vartheta}^{\alpha,1}, \|w\|^2_{L^2}=1}E(w).
\end{equation*}
The coercivity of $E(u)$ implies that the minimizing sequence $\{u_n\}^{\infty}_{n=1}$ is bounded in $\mathscr{D}_{\vartheta}^{\alpha,1}$. Since $\mathscr{D}_{\vartheta}^{\alpha,1}$ is reflexive, there exists $u \in \mathscr{D}_{\vartheta}^{\alpha,1}$, such that $u_n \rightharpoonup u$ weakly in $\mathscr{D}_{\vartheta}^{\alpha,1}$ as $n\rightarrow \infty$.
Then the weak lower-semicontinuity implies that $u$ is a minimizer of $E(u)$. Moreover, by Theorem \ref{FDEA}, the minimizer $u$ is a weak solution of the Euler-Lagrange equation (\ref{NAME}). This completes the proof.
\end{proof}

\section{Regularity of the minimizer}\label{S4}

In this section, we study the regularity of the minimizer $u$. We establish $L^2$ estimates in Theorem \ref{L2E} and Schauder estimates for $u$.

\subsection{$L^2$ estimates}\label{sbL2R}

\begin{thm}\label{L2E}
The minimizer $u$ given in Theorem 1.1 satisfies $u\in\mathscr{D}_{\vartheta}^{\alpha,2}$.
\end{thm}
\begin{proof}
Let $u\in\mathscr{D}_{\vartheta}^{\alpha,1}$ be a weak solution for the elliptic singular stochastic partial differential equation (\ref{NAME}). Recall definition (\ref{thmBh}), for every $v\in \mathscr{D}_{\vartheta}^{\alpha,1}$, the remainder term $u^{\sharp}$ satisfies
\begin{align}\label{uvarW}
   \langle \nabla v^{\sharp}, \nabla u^{\sharp} \rangle   = & - \langle v^{\sharp},  u^{\sharp} \rangle - \langle \mathscr{L} R(v), u^{\sharp}\rangle + D(v,\xi,u^{\sharp}) - \langle (\mathscr{L}(v\prec\vartheta)- v \prec \xi), u^{\sharp} \rangle  \nonumber \\
     & + \langle v , G(u) \rangle + a\int_{\mathbb{T}^2} v(x)|u(x)|^2 u(x) dx + \lambda \langle v,u\rangle,
\end{align}
where
\begin{equation*}
    D(v,\xi,u^{\sharp}) = \langle v, u^{\sharp}\circ\xi \rangle - \langle v\prec\xi, u^{\sharp}\rangle.
\end{equation*}
Then we substitute $v^{\sharp} = 2^{2(1-\kappa)k}\Delta_k u^{\sharp}$, $v = \Gamma(2^{2(1-\kappa)k}\Delta_k u^{\sharp})$ into (\ref{uvarW}), and deduce
\begin{align}
     & \langle \nabla (2^{2(1-\kappa)k}\Delta_k u^{\sharp}), \nabla u^{\sharp} \rangle + \langle 2^{(1-\kappa)k}\Delta_k u^{\sharp},  2^{(1-\kappa)k}\Delta_k u^{\sharp} \rangle\nonumber \\
     = &  -\langle \mathscr{L} R(\Gamma(2^{2(1-\kappa)k}\Delta_k u^{\sharp})), u^{\sharp}\rangle + D(\Gamma(2^{2(1-\kappa)k}\Delta_k u^{\sharp}),\xi,u^{\sharp})   \nonumber \\
     & - \langle (\mathscr{L}(\Gamma(2^{2(1-\kappa)k}\Delta_k u^{\sharp}))\prec\vartheta)-\Gamma(2^{2(1-\kappa)k}\Delta_k u^{\sharp}) \prec \xi), u^{\sharp} \rangle +\langle \Gamma(2^{2(1-\kappa)k}\Delta_k u^{\sharp}) , \Psi(u) - u^{\sharp}\circ \xi \rangle \nonumber \\
     &+ \int_{\mathbb{T}^2} \Gamma(2^{2(1-\kappa)k}\Delta_k u^{\sharp})a|u|^2 u dx .
\end{align}
By Bernstein inequality Lemma \ref{BerI}, we have
\begin{equation}\label{R450}
    \|2^{2(2-\kappa)k}\Delta_k u^{\sharp}\|^2_{L^2} \lesssim \langle \nabla (2^{2(1-\kappa)k}\Delta_k u^{\sharp}), \nabla u^{\sharp} \rangle.
\end{equation}
Since $\|\Gamma(2^{2(1-\kappa)k}\Delta_k u^{\sharp})\|_{H^{\alpha}} \leq 2\| 2^{2(1-\kappa)k}\Delta_k u^{\sharp}\|_{H^{\alpha}} $, by Lemma \ref{Gam} and \ref{RE}, we have
\begin{align}\label{R451}
    \langle \mathscr{L} R(\Gamma(2^{2(1-\kappa)k}\Delta_k u^{\sharp})), u^{\sharp}\rangle  \lesssim & \|\mathscr{L} R(\Gamma(2^{2(1-\kappa)k}\Delta_k u^{\sharp}))\|_{H^{2\alpha -2}}\|u^{\sharp}\|_{H^{\alpha}} \nonumber \\
    \lesssim & \| R(\Gamma(2^{2(1-\kappa)k}\Delta_k u^{\sharp}))\|_{H^{2\alpha}}\|u^{\sharp}\|_{H^{\alpha}} \nonumber \\
    \lesssim & \|2^{2(1-\kappa)k}\Delta_k u^{\sharp}\|_{H^{\alpha}}\|u^{\sharp}\|_{H^{\alpha}}.
\end{align}
\begin{equation}\label{R452}
    D(\Gamma(2^{2(1-\kappa)k}\Delta_k u^{\sharp}),\xi,u^{\sharp})\lesssim  \|\xi\|_{\mathscr{C}^{-1-\kappa}}\|2^{2(1-\kappa)k}\Delta_k u^{\sharp}\|_{H^{\kappa}}\|u^{\sharp}\|_{H^{1}}.
\end{equation}
\begin{equation}\label{R453}
    \langle (\mathscr{L}(\Gamma(2^{2(1-\kappa)k}\Delta_k u^{\sharp})\prec\vartheta) - \Gamma(2^{2(1-\kappa)k}\Delta_k u^{\sharp}) \prec \xi), u^{\sharp} \rangle  \lesssim  \|\xi\|_{\mathscr{C}^{-1-\kappa}}\|2^{2(1-\kappa)k}\Delta_k u^{\sharp}\|_{H^{\kappa}}\|u^{\sharp}\|_{H^{1}}
\end{equation}
\begin{equation}\label{R454}
    \langle \Gamma(2^{2(1-\kappa)k}\Delta_k u^{\sharp}) , \Psi(u) - u^{\sharp}\circ \xi \rangle \leq \|2^{2(1-\kappa)k}\Delta_k u^{\sharp}\|_{L^2}\|\Psi(u) - u^{\sharp}\circ \xi\|_{L^2} \lesssim  \|2^{2(1-\kappa)k}\Delta_k u^{\sharp}\|_{L^2}\|u\|_{H^{\alpha}}.
\end{equation}
By Sobolev embedding, we obtain
\begin{equation}\label{R455}
    \int_{\mathbb{T}^2} \Gamma(2^{2(1-\kappa)k}\Delta_k u^{\sharp})a|u|^2 u dx \lesssim \|\Gamma(2^{2k}\Delta_k u^{\sharp})\|_{L^2}\|a|u|^2 u\|_{L^2} \lesssim \|2^{2(1-\kappa)k}\Delta_k u^{\sharp}\|_{L^2}\|u\|_{H^{\alpha}}.
\end{equation}
We combine above estimates (\ref{R450})-(\ref{R455}), and sum over $ k$ to get
\begin{align}
    \|u^{\sharp}\|^2_{H^{2-\kappa}} + \|u^{\sharp}\|^2_{H^{1-\kappa}} \lesssim \|u^{\sharp}\|_{H^{2-\kappa}}(\|u\|_{H^{\alpha}}+\|u^{\sharp}\|_{H^1}).
\end{align}
After using the weighted Young inequality and choosing $\delta$ small enough to absorb $\|u^{\sharp}\|_{H^{2-\kappa}}$ into the left hand side, we have
\begin{equation}
    \|u^{\sharp}\|^2_{H^{2-\kappa}} \lesssim \|u\|^2_{H^{\alpha}}+\|u^{\sharp}\|^2_{H^1}.
\end{equation}
Thus remainder term $u^{\sharp} \in H^{2-\kappa}$. Now we substitute $v^{\sharp} = 2^{2k}\Delta_k u^{\sharp}$, $v = \Gamma(2^{2k}\Delta_k u^{\sharp})$ into (\ref{uvarW}), and deduce
\begin{align}
     & \langle \nabla (2^{2k}\Delta_k u^{\sharp}), \nabla u^{\sharp} \rangle \nonumber \\
     = & -1\langle 2^{k}\Delta_k u^{\sharp},  2^{k}\Delta_k u^{\sharp} \rangle -\langle \mathscr{L} R(\Gamma(2^{2k}\Delta_k u^{\sharp})), u^{\sharp}\rangle + D(\Gamma(2^{2k}\Delta_k u^{\sharp}),\xi,u^{\sharp})   \nonumber \\
     & - \langle (\mathscr{L}(\Gamma(2^{2k}\Delta_k u^{\sharp}))\prec\vartheta)-\Gamma(2^{2k}\Delta_k u^{\sharp}) \prec \xi), u^{\sharp} \rangle +\langle \Gamma(2^{2k}\Delta_k u^{\sharp}) , \Psi(u) - u^{\sharp}\circ\xi \rangle \nonumber \\
     &+ \int_{\mathbb{T}^2} \Gamma(2^{2k}\Delta_k u^{\sharp})a|u|^2 u dx .
\end{align}
We estimate
\begin{equation}\label{R2450}
    \|2^{4k}\Delta_k u^{\sharp}\|^2_{L^2} \lesssim \langle \nabla (2^{2k}\Delta_k u^{\sharp}), \nabla u^{\sharp} \rangle.
\end{equation}
\begin{equation}\label{R2451}
    \langle \mathscr{L} R(\Gamma(2^{2k}\Delta_k u^{\sharp})), u^{\sharp}\rangle  \lesssim \|\mathscr{L} R(\Gamma(2^{2k}\Delta_k u^{\sharp}))\|_{H^{\alpha-2}}\|u^{\sharp}\|_{H^{2\alpha}} \lesssim \|2^{2k}\Delta_k u^{\sharp}\|_{L^2}\|u^{\sharp}\|_{H^{2\alpha}}.
\end{equation}
\begin{equation}\label{R2452}
    D(\Gamma(2^{2k}\Delta_k u^{\sharp}),\xi,u^{\sharp})\lesssim  \|\xi\|_{\mathscr{C}^{-1-\kappa}}\|2^{2k}\Delta_k u^{\sharp}\|_{L^2}\|u^{\sharp}\|_{H^{2-\kappa}}.
\end{equation}
\begin{equation}\label{R2453}
    \langle (\mathscr{L}(\Gamma(2^{2k}\Delta_k u^{\sharp})\prec\vartheta) - \Gamma(2^{2k}\Delta_k u^{\sharp}) \prec \xi), u^{\sharp} \rangle  \lesssim  \|\xi\|_{\mathscr{C}^{-1-\kappa}}\|2^{2k}\Delta_k u^{\sharp}\|_{L^2}\|u^{\sharp}\|_{H^{2-\kappa}}
\end{equation}
\begin{equation}\label{R2454}
    \langle \Gamma(2^{2k}\Delta_k u^{\sharp}) , \Psi(u) - u^{\sharp}\circ\xi \rangle \leq \|2^{2k}\Delta_k u^{\sharp}\|_{L^2}\|\Psi(u) - u^{\sharp}\circ\xi\|_{L^2} \lesssim  \|2^{2k}\Delta_k u^{\sharp}\|_{L^2}\|u\|_{H^{\alpha}}.
\end{equation}
By Sobolev embedding, we obtain
\begin{equation}\label{R2455}
    \int_{\mathbb{T}^2} \Gamma(2^{2k}\Delta_k u^{\sharp})a|u|^2 u dx \lesssim \|\Gamma(2^{2k}\Delta_k u^{\sharp})\|_{L^2}\|a|u|^2 u\|_{L^2} \lesssim \|2^{2k}\Delta_k u^{\sharp}\|_{L^2}\|u\|_{H^{\alpha}}.
\end{equation}
We combine above estimates, and sum over $ k$ to get
\begin{align}
    \|u^{\sharp}\|^2_{H^{2-\kappa}} + \|u^{\sharp}\|^2_{H^{1-\kappa}} \lesssim \|u^{\sharp}\|_{H^{2-2\kappa}}(\|u\|_{H^{\alpha}}+\|u^{\sharp}\|_{H^1}).
\end{align}
After using the weighted Young inequality and choosing $\delta$ small enough to absorb $\|u^{\sharp}\|_{H^{2-2\kappa}}$ into the left hand side, we have
\begin{equation}
    \| u^{\sharp} \|^2_{H^{2}} \lesssim \|u\|^2_{H^{\alpha}}+\|u^{\sharp}\|^2_{H^1}.
\end{equation}
The proof is completed.
\end{proof}

\subsection{Schauder estimates}\label{sbSE}

By $L^2$ estimates, the minimizer $u \in \mathscr{D}_{\vartheta}^{\alpha,2}$. Moreover, $u$ can be decomposed into $u = u\prec\vartheta + R(u) + u^{\sharp}$, and these two terms $u\prec \vartheta + R(u)$ and $u^{\sharp}$ satisfy the elliptic system (\ref{decPAM}).
By the elliptic system, we establish the following Schauder estimates for $u$, $R(u)$ ,and $\psi$.

\begin{thm}\label{SEu}
The minimizer $u$ given in Theorem \ref{Ex} satisfies $u \in \mathscr{C}^{\alpha}$, with remainders $R(u)\in \mathscr{C}^{2\alpha}$ and $u^{\sharp}\in \mathscr{C}^{3\alpha}$.
\end{thm}

\begin{proof}
We prove in three steps.\\
{\bf Step 1. Bound for $u$ in $\mathscr{C}^{\alpha}$} \\
First, we estimate $\Phi(u)$ in $ \mathscr{C}^{-2+\kappa}$, and derive a priori for $\phi$ in $ \mathscr{C}^{\kappa} $ by elliptic Schauder estimates.
By Besov embedding, we have $H^{\alpha} = B^{\alpha}_{2,2} \hookrightarrow \mathscr{C}^{\alpha-1}$ and $u \in \mathscr{C}^{\alpha-1}$. Then the paraproduct estimates imply that
\begin{equation}\label{E411}
    \|u \prec \mathscr{U}_{>} \xi\|_{\mathscr{C}^{-2+\kappa}} + \|u \succ \mathscr{U}_{>} \xi\|_{\mathscr{C}^{-2+\kappa}} \lesssim 2^{-(\alpha-2\kappa)} \|\xi\|_{\mathscr{C}^{-1-\kappa}}\|u\|_{\mathscr{C}^{\alpha-1}},
\end{equation}
and
\begin{equation}\label{E412}
    \|u \prec \mathscr{U}_{>} (\vartheta\diamond\xi)\|_{\mathscr{C}^{-2+\kappa}} + \|u \succ \mathscr{U}_{>} (\vartheta\diamond\xi)\|_{\mathscr{C}^{-2+\kappa}}
   \lesssim   2^{-(1-\kappa+\alpha)}\|\vartheta\diamond\xi \|_{\mathscr{C}^{-2\kappa }} \|u\|_{\mathscr{C}^{\alpha-1}}.
\end{equation}
Then by Schauder estimates, we have
\begin{equation}\label{Ephi1E}
    \|u\prec\vartheta + R(u)\|_{\mathscr{C}^{\kappa}}\lesssim \|\Phi(u)\|_{\mathscr{C}^{-2+\kappa}} \lesssim (2^{-(1 -\kappa)L}+2^{-(2-3\kappa)K})\|u\|_{\mathscr{C}^{\alpha-1}} \lesssim \|u\|_{H^{\alpha}}.
\end{equation}
Since $u^{\sharp} \in H^2$, the Sobolev embedding theorem implies that $\psi \in \mathscr{C}^{\alpha}$. Thus $u = u\prec \vartheta + R(u) \in \mathscr{C}^{\kappa}$.
Now we estimate $\Phi(u)$ in $\mathscr{C}^{-2+\alpha}$, and derive a bound for $u\prec \vartheta + R(u)$ in $\mathscr{C}^{\alpha}$ by Schauder estimates.
By Bony's paraproduct estimates, we have
\begin{align}\label{E413}
  \|u \prec \mathscr{U}_{>} \xi + u \succ \mathscr{U}_{>} \xi\|_{\mathscr{C}^{-2+\alpha}} \lesssim   &  \|\mathscr{U}_{>}\xi\|_{\mathscr{C}^{\alpha-2}} \|u\|_{L^{\infty}} \nonumber \\
     \lesssim &  2^{-(1-\kappa-\alpha)L}\|\mathscr{U}_{>}\xi\|_{\mathscr{C}^{-1-\kappa}}\|u\|_{L^{\infty}},
\end{align}
and
\begin{align}\label{E414}
  \|u \succ \mathscr{U}_{>}(\vartheta\circ\xi) + u \prec \mathscr{U}_{>}(\vartheta\circ\xi)\|_{\mathscr{C}^{-2+\alpha}} \lesssim   &  \| \mathscr{U}_{>}(\vartheta\diamond\xi)\|_{\mathscr{C}^{\alpha-2 }} \|u\|_{L^{\infty}} \nonumber \\
     \lesssim &  2^{-(2-2\kappa -\alpha)K}\|\mathscr{U}_{>}(\vartheta\circ\xi)\|_{\mathscr{C}^{-2\kappa}}\|u\|_{L^{\infty}}.
\end{align}
Then by above estimates (\ref{E411})-(\ref{E414}), we have
\begin{equation}\label{phicalphaE}
    \|u\prec\vartheta + R(u)\|_{\mathscr{C}^{\alpha}} = \|\mathscr{L}^{-1}(\Phi(u))\|_{\mathscr{C}^{\alpha}} \lesssim \|\Phi(u)\|_{\mathscr{C}^{-2+\alpha}}  \lesssim  \|u\|_{L^{\infty}} \lesssim \|\phi\|_{\mathscr{C}^{\kappa}}+\|\psi\|_{\mathscr{C}^{\kappa}} \lesssim \|u\|_{\mathscr{D}_{\vartheta}^{\alpha,2}}.
\end{equation}
Thus $u = u\prec\vartheta + R(u) + u^{\sharp} \in \mathscr{C}^{\alpha}$.\\
{\bf Step 2. Bound for $R(u)$ in $\mathscr{C}^{2\alpha}$} \\
Note that $R(u)$ satisfies
\begin{align}\label{usharpgE}
      \mathscr{L}R(u) & =  :- \mathscr{L}(u\prec \vartheta) + \Phi(u)  \nonumber \\
                    & = - \mathscr{L}u\prec\vartheta - \nabla u \prec \nabla\vartheta - u \prec \xi + \Phi(u).
\end{align}
By paraproduct estimates and $ \|\xi\|_{\mathscr{C}^{\alpha-2}} \lesssim 1$, we have
\begin{equation}\label{Es1E}
        \|\mathscr{L}u\prec\vartheta\|_{\mathscr{C}^{2\alpha-2}} \lesssim \|\mathscr{L}u\|_{\mathscr{C}^{\alpha-2}}\|\vartheta\|_{\mathscr{C}^{1+\kappa}} \lesssim \|u\|_{\mathscr{C}^{\alpha} },
\end{equation}
and
\begin{equation}\label{Es2E}
        \|\nabla u \prec \nabla\vartheta\|_{\mathscr{C}^{2\alpha-2}} \lesssim \|\nabla u\|_{\mathscr{C}^{\alpha -1}}\|\nabla\vartheta\|_{\mathscr{C}^{\kappa}} \lesssim \|u\|_{\mathscr{C}^{\alpha} }.
\end{equation}
Recall the choosing of $L,K$, by paraproduct estimates, we get
\begin{align}\label{Es3E}
       & \|-u\prec\xi+\Phi(u)\|_{\mathscr{C}^{2\alpha -2} } \nonumber \\
    \lesssim & \|u \succ \mathscr{U}_{>} \xi + u \succ \mathscr{U}_{>}(\vartheta\circ\xi)\|_{\mathscr{C}^{2\alpha -2} }  +\|u \prec \mathscr{U}_{\leq} \xi  + u \prec \mathscr{U}_{>}(\vartheta\circ\xi)\|_{\mathscr{C}^{2\alpha -2} } \nonumber \\
    \lesssim & (\| \mathscr{U}_{>} \xi\|_{\mathscr{C}^{\alpha-2}} + \| \mathscr{U}_{>} (\vartheta\circ\xi)\|_{\mathscr{C}^{\alpha-2}})\|u\|_{\mathscr{C}^{\alpha} }    +(\| \mathscr{U}_{\leq} \xi\|_{\mathscr{C}^{2\alpha-2}}+ \| \mathscr{U}_{>} (\vartheta\circ\xi)\|_{\mathscr{C}^{2\alpha-2}})\|u\|_{L^{\infty}} \nonumber \\
   \lesssim  & (2^{-(1-\kappa-\alpha)L}+2^{-(2-2\kappa-\alpha)K})\|u\|_{\mathscr{C}^{\alpha} }  + (2^{(2\alpha-1+\kappa )L}+ 2^{-(2-2\kappa-2\alpha)K})\|u\|_{L^{\infty}} \nonumber \\
   \lesssim & \|u\|_{\mathscr{C}^{\alpha} }
\end{align}
Combining with above estimates (\ref{Es1E})-(\ref{Es3E}), and using the Schauder estimates, we have
\begin{align}\label{phisharpE}
    \|R(u)\|_{\mathscr{C}^{2\alpha}} \lesssim  \|- \mathscr{L}u\prec\vartheta - \nabla u \prec \nabla\vartheta - u) \prec \xi + \Phi(u)\|_{\mathscr{C}^{2\alpha -2} }
    \lesssim  \|u\|_{\mathscr{C}^{\alpha} }.
\end{align}
{\bf Step 3. Bound for $u^{\sharp}$ in $\mathscr{C}^{3\alpha}$} \\
We derive a bound for $u^{\sharp}$ in $\mathscr{C}^{3\alpha}$. By paraproduct estimates and a priori estimates (\ref{phicalphaE}), (\ref{phisharpE}), we have
\begin{equation}\label{E41E}
    \|R(u) \circ \xi\|_{\mathscr{C}^{3\alpha -2}} \lesssim  \|\xi\|_{\mathscr{C}^{-1-\kappa}}\|R(u)\|_{\mathscr{C}^{2\alpha}}
    \lesssim \|u\|_{\mathscr{C}^{\alpha} }
\end{equation}
\begin{equation}\label{E42E}
    \|u^{\sharp}\circ\xi\|_{\mathscr{C}^{3\alpha -2}}  \lesssim \|\psi\|_{\mathscr{C}^{2\alpha }},
\end{equation}
\begin{equation}\label{E43E}
    \|\mathscr{U}_{\leq}(\vartheta\diamond\xi) \prec u\|_{\mathscr{C}^{3\alpha -2}}
    \lesssim  \|\vartheta\diamond\xi\|_{\mathscr{C}^{2\alpha -2}}\|u\|_{\mathscr{C}^{\alpha}} \lesssim \|u\|_{\mathscr{C}^{\alpha} }.
\end{equation}
\begin{equation}\label{E44E}
    \|(\vartheta\diamond\xi) \circ u\|_{{\mathscr{C}^{3\alpha -2}}}
    \lesssim  \|u\|_{\mathscr{C}^{\alpha}}.
\end{equation}
The commutator estimate Lemma \ref{commutatorE} implies that
\begin{equation}
    \|C(u,\vartheta,\xi)\|_{{\mathscr{C}^{3\alpha-2}}} \lesssim \|u\|_{{ \mathscr{C}^{\alpha}}} \|\xi\|_{\alpha -2}\|\vartheta\|_{\alpha} \lesssim \|u\|_{\mathscr{C}^{\alpha}}.
\end{equation}
According to Lemma \ref{Localization}, we have
\begin{align}
     & \|u\prec \mathscr{U}_{\leq}(\vartheta\circ\xi) \|_{\mathscr{C}^{3\alpha-2} }+ \|u\succ \mathscr{U}_{\leq} \xi \|_{\mathscr{C}^{3\alpha-2} } \nonumber \\
     \lesssim & \|u\|_{L^{\infty}}(\|\mathscr{U}_{\leq} (\vartheta\circ\xi)\|_{\mathscr{C}^{3\alpha-2}}+ \|\mathscr{U}_{\leq} \xi\|_{\mathscr{C}^{3\alpha-2}}) \nonumber \\
     \lesssim & 2^{(3\alpha-1+\kappa)L}\|\xi\|_{\mathscr{C}^{-1-\kappa}}\|u\|_{L^{\infty}} + 2^{(3\alpha-2+2\kappa)K}\|\vartheta\circ\xi\|_{\mathscr{C}^{-2\kappa}}\|u\|_{L^{\infty}},
\end{align}
and
\begin{align}
       & \|u \succ \mathscr{U}_{\leq}(\vartheta\circ\xi)\|_{\mathscr{C}^{3\alpha-2 }}+ \|u \succ \mathscr{U}_{\leq}\xi\|_{\mathscr{C}^{3\alpha-2 }} \nonumber \\
        \lesssim & (\|\mathscr{U}_{\leq}(\vartheta\circ\xi)\|_{\mathscr{C}^{2\alpha-2}} + \|\mathscr{U}_{\leq}\xi\|_{\mathscr{C}^{2\alpha-2}}) \|u\|_{{ \mathscr{C}^{\alpha}}} \nonumber\\
        \lesssim & (2^{(2\alpha-1+\kappa)L}+1)\|u\|_{{ \mathscr{C}^{\alpha}}}.
\end{align}
By (\ref{Ephi1E}), we have
\begin{align}\label{E48E}
    a\||u|^2 u\|_{\mathscr{C}^{3\alpha-2 }} \lesssim \|u\|^2_{L^{\infty}}\|u\|_{\mathscr{C}^{\alpha}}.
\end{align}
Combining with above estimates (\ref{E41E})-(\ref{E48E}), and using the interpolation inequality in Lemma \ref{interpolation} and weighted Young inequality, for every $\delta>0$ we have
\begin{equation*}
    \|\Psi(u)\|_{\mathscr{C}^{3\alpha-2 }} \lesssim \|u\|_{\mathscr{C}^{\alpha}} + \|u^{\sharp}\|^{2/3}_{\mathscr{C}^{3\alpha}}\|u^{\sharp}\|^{1/3}_{L^{\infty}} \lesssim \|u\|_{\mathscr{C}^{\alpha}} + \delta\|u^{\sharp}\|_{\mathscr{C}^{3\alpha}} + C_{\delta}\|u^{\sharp}\|_{\mathscr{C}^{\alpha}}.
\end{equation*}
Then by Schauder estimates and choosing $\delta$ small enough, we obtain
\begin{align}\label{psiE}
\|u^{\sharp}\|_{\mathscr{C}^{3\alpha}} = \|\mathscr{L}^{-1}(a|u|^2 u + \Psi(u))\|_{\mathscr{C}^{3\alpha}}\lesssim  1+ \|u^{\sharp}\|_{\mathscr{C}^{\alpha}} + \|u \|_{\mathscr{C}^{\alpha}} +\|u^{\sharp}\|^{2}_{L^{\infty}}\|u\|_{\mathscr{C}^{\alpha}}.
\end{align}
The proof is completed.
\end{proof}

\section{The tail estimate of the principle eigenvalue}

In this section, we turn to prove the tail estimate for the principle eigenvalue as Theorem \ref{TEGS}.
Recall that principle eigenvalue is given by following $L^2$ constrained minimization problem:
\begin{equation}\label{L2ME}
    \lambda = \inf_{w \in \mathscr{D}^{\alpha,1}_{\vartheta}, \|w\|_{L^2}=1} E(w) = \frac{1}{2} B_{\mathscr{H}}(u,u) + \frac{a}{4}\int_{\mathbb{T}^2} |u|^4 dx,
\end{equation}
where $u$ is a minimizer of the above $L^2$ constrained minimization problem which satisfies the stationary nonlinear Schr\"{o}dinger equation (\ref{NAME}). By energy estimates, we obtain the upper bound and lower bound as follows.

\begin{proof} \textbf{Upper bound:}\\
We choose the first eigenfunction $e_{1}$ of the Anderson hamiltonian $\mathscr{H}$ as a test function in the energy functional $E(u)$, and use Theorem \ref{ThmBh} to estimate
\begin{align}
    \lambda = & \frac{1}{2} B_{\mathscr{H}}(u,u) + \frac{a}{4}\int_{\mathbb{T}^2} |u|^4 dx \nonumber \\
            \leq & \frac{1}{2} B_{\mathscr{H}}(e_{1},e_{1}) + \frac{a}{4}\int_{\mathbb{T}^2} |e_{1}|^4 dx \nonumber \\
            \leq & \Lambda_{1} + \|e_{1}\|^2_{\mathscr{D}_{\vartheta}^{\alpha,1}} \nonumber \\
            \leq & C'\Lambda_{1} + C_{\xi},
\end{align}
where the deterministic constant $C'$ is independent with $\|\xi\|_{\mathscr{C}^{-1-\kappa}}$ and $\|\vartheta\diamond\xi\|_{\mathscr{C}^{-2\kappa}}$.
It follows that 
\begin{equation}
    \mathbb{P}( \lambda \leq -x )  \leq  \mathbb{P}(\Lambda_{1} \leq -\frac{x}{C'} ).
\end{equation}
Then by the tail estimate (\ref{TElam}) for $\Lambda_{1}$, we obtain the upper bound.\\
\textbf{Lower bound:}\\
In order to get the lower bound, we choose $e_{1}$ as a test function again, and use the identity (\ref{L2ME}) to estimate
\begin{align}
    \lambda = & \frac{1}{2} B_{\mathscr{H}}(u,u) + \frac{a}{4} \int_{\mathbb{T}^2} |u|^4 dx \nonumber \\
            \geq & \frac{1}{2} B_{\mathscr{H}}(u,u) \nonumber \\
            \geq & \frac{1}{2} B_{\mathscr{H}}(e_{1},e_{1}) \nonumber \\
            = & \Lambda_{1}
\end{align}
Then combining with the tail estimate for $\Lambda_{1}$, we obtain the lower bound of $\lambda$. The proof is completed.
\end{proof}

 \section{Conclusion}\label{DC}

This paper is an attempt to build a bridge between the variation problem and the singular stochastic partial differential equation in the paracontrolled distribution framework. We define the energy functional $E(u)$ associated with the Anderson hamiltonian on the suitable energy space $\mathscr{D}_{\vartheta}^{\alpha,1}$ with paracontrolled distributions structure . Then we show that the energy functional $E(u)$ is a $C^1$ map from $\mathscr{D}_{\vartheta}^{\alpha,1}$ to $\mathbb{R}$, and the Euler-Lagrange equation of the energy functional $E(u)$ is the elliptic singular stochastic partial differential equation (\ref{NAME}). By the direct method of calculus of  variation, we proved the existence of minimizers. Since the minimizer $u$ is a weak solution of the elliptic singular stochastic partial differential equation (\ref{NAME}), we use the structure of the singular stochastic partial differential equation (\ref{NAME}), and establish the $L^2$ estimates and Schauder estimates for the minimizer $u$.

We restrict our study to the Anderson hamiltonian in $2$-dimensional torus $\mathbb{T}^2$ with the periodic boundary condition. By Dirichlet and Neumann Besov spaces, we can replace the periodic boundary condition by Dirichlet or Neumann boundary condition. In the $3$-dimensional case, the regularity of spatial white noise is much more singular, which makes the definition of the Anderson hamiltonian and its energy space more complex. This will be the subject of future work.

\section*{Appendix}

\appendix

\section{Besov space and Bony's paraproduct}

In this Appendix, we recall some basic notations and useful estimates about Littlewood-Paley decomposition, Besov space and Bony's paraproduct. For more details, we refer to \cite{BCD2011,GIP2015, GH2019}.

Littlewood-Paley decomposition can describe the regularity of (general) functions via the decomposition of a (general) function into a series of smooth functions with different frequencies. In order to do this, we introduce the following dyadic partition. 

There exist two smooth radial functions $\chi$ and $\varrho$, valued in the interval $[0,1]$, and so that
\begin{enumerate}
\item $supp(\chi) \subset B_{4/3}(0)$ and $supp(\varrho)\subset \{x \in \mathbb{R}^d: \frac{3}{4}\leq |x| \leq \frac{8}{3} \}$;
\item $\chi(x)+\sum_{j\geq 0}\varrho(2^{-j}x)=1, \quad x\in \mathbb{R}^n$;
\item $supp(\chi)\cap supp(\varrho(2^{-j}x)) = \emptyset $ for $j\geq 1$ and $supp(\varrho(2^{-i}x)) \cap supp(\varrho(2^{-j}x)) = \emptyset $ for $|i-j|\geq 2$.
\end{enumerate}

\begin{defn}
For $u \in \mathcal{S}'(\mathbb{T}^d)$ and $j\geq -1$, the Littlewood-Paley blocks of $u$ are defined as
\begin{equation*}
    \Delta_j u = \mathscr{F}^{-1}_{\mathbb{T}^d}(\varrho_j \mathscr{F}_{\mathbb{T}^d}u),
\end{equation*}
where $\varrho_{-1}=\chi$ and $\varrho_j=\varrho(2^{-j}\cdot)$ for $j\geq 0$.
\end{defn}
Now we define the Besov space $B^{\alpha}_{p,q}$ via the Littlewood-Paley blocks as follows.
\begin{defn}
For $\alpha\in\mathbb{R}$, $p,q \in [1,\infty]$, we define Besov space
\begin{equation*}
    B^{\alpha}_{p,q}(\mathbb{T}^d) = \left\{ u\in \mathcal{S}'(\mathbb{T}^d): \|u\|_{B^{\alpha}_{p,q}(\mathbb{T}^d)}=  	\left( \sum_{j \geq -1}(2^{j\alpha}\|\Delta_j u\|_{L^p(\mathbb{T}^d)})^q \right)^{1/q} < \infty \right\}.
\end{equation*}
\end{defn}

For $\alpha\in\mathbb{R}$, the H\"{o}lder-Besov space on $\mathbb{T}^d$ is denoted by $\mathscr{C}^{\alpha}=B^{\alpha}_{\infty,\infty}(\mathbb{T}^d)$. We remark that if $\alpha \in (0,\infty)\backslash\mathbb{N}$, then the H\"{o}lder-Besov space $\mathscr{C}^{\alpha}$ is equal to the H\"{o}lder space $C^{\alpha}(\mathbb{T}^d)$. The Sobolev space $H^{\alpha}$ is the same as the Besov space $B^{\alpha}_{2,2}(\mathbb{T}^d)$.

We need the following Bernstein inequality in $L^2$ estimates.
\begin{lem}\label{BerI}
Let $\mathscr{B}$ ba a unit ball, $n \in \mathbb{N}_0$, and $1\leq p \leq q \leq \infty$. Then for every $\lambda >0$ and $u \in L^p$ with $supp(\mathscr{F}u) \subset \lambda \mathscr{B}$, we have
\begin{equation*}
    \max_{1 \in \mathbb{N}^d: |1|=n}\|\partial_{1} u\|_{L^q} \lesssim C_{n,p,q,\mathscr{B}}\lambda^{n+d(\frac{1}{p}-\frac{1}{q})}\|u\|_{L^p}.
\end{equation*}
\end{lem}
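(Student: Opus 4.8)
The plan is to establish this classical Bernstein inequality by the reproduction-kernel method: using the spectral support hypothesis to recover $u$ from a fixed smooth Fourier multiplier, moving the derivatives onto that (smooth, hence integrable) kernel, and then reading off the power of $\lambda$ from a single application of Young's convolution inequality together with a scaling computation. First I would fix, once and for all, a smooth compactly supported function $\tilde\varphi$ with $\tilde\varphi\equiv 1$ on the unit ball $\mathscr{B}$ and $\mathrm{supp}\,\tilde\varphi\subset 2\mathscr{B}$, and set $\psi:=\mathscr{F}^{-1}\tilde\varphi$ together with $\psi_\lambda:=\mathscr{F}^{-1}[\tilde\varphi(\cdot/\lambda)]$, so that $\psi_\lambda(x)=\lambda^{d}\psi(\lambda x)$. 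Since $\tilde\varphi(\cdot/\lambda)\equiv 1$ on $\lambda\mathscr{B}$, which contains $\mathrm{supp}(\mathscr{F}u)$, the multiplier $\tilde\varphi(\cdot/\lambda)$ acts as the identity on $u$; this yields the reproduction identity $u=\psi_\lambda*u$, and differentiating transfers the derivative onto the kernel, $\partial_\mu u=(\partial_\mu\psi_\lambda)*u$ for every multi-index $\mu$ with $|\mu|=n$.

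The second step is purely quantitative. Because $1\le p\le q\le\infty$, the exponent $r$ determined by $1+\tfrac1q=\tfrac1r+\tfrac1p$ satisfies $r\ge 1$, so Young's inequality gives $\|\partial_\mu u\|_{L^q}\le\|\partial_\mu\psi_\lambda\|_{L^r}\,\|u\|_{L^p}$. From $\partial_\mu\psi_\lambda(x)=\lambda^{d+n}(\partial_\mu\psi)(\lambda x)$ and the change of variables $y=\lambda x$ one computes $\|\partial_\mu\psi_\lambda\|_{L^r}=\lambda^{\,d+n-d/r}\|\partial_\mu\psi\|_{L^r}$, and the identity $\tfrac1r=1-(\tfrac1p-\tfrac1q)$ simplifies the exponent to $d+n-d/r=n+d(\tfrac1p-\tfrac1q)$. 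Since $\psi$ is a Schwartz function, $\|\partial_\mu\psi\|_{L^r}<\infty$, and taking the maximum over $|\mu|=n$ absorbs this and the normalization constants into $C_{n,p,q,\mathscr{B}}$, which is exactly the asserted bound.

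The one place that needs real care --- and the step I would treat as the main obstacle --- is the transfer of this argument to the torus $\mathbb{T}^d$, where the dual variable is discrete and a periodic function cannot be dilated. I would replace $\psi_\lambda$ by its periodization $K_\lambda:=\sum_{m\in\mathbb{Z}^d}\psi_\lambda(\cdot+m)$, whose periodic Fourier coefficients equal $\tilde\varphi(k/\lambda)$ by Poisson summation; these are $1$ on $\{k:\,k/\lambda\in\mathscr{B}\}\supset\mathrm{supp}(\mathscr{F}_{\mathbb{T}^d}u)$, so the reproduction identity $\partial_\mu u=(\partial_\mu K_\lambda)*_{\mathbb{T}^d}u$ persists and the periodic Young inequality applies. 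The kernel norm is then estimated on $\mathbb{R}^d$ before periodizing, using $\|K_\lambda\|_{L^r(\mathbb{T}^d)}\lesssim\|\psi_\lambda\|_{L^r(\mathbb{R}^d)}$ (a consequence of the rapid decay of $\psi$), so that the scaling computation above is carried out on $\mathbb{R}^d$ and yields the same $\lambda$-power. The bookkeeping is cleanest in the regime relevant to the Littlewood--Paley blocks, where $\lambda=2^{j}\ge\tfrac12$, and one should check that no $\lambda$-dependent constant is lost in the periodization uniformly over this range.
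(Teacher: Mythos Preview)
The paper does not supply a proof of this lemma: it is stated in the preliminaries as a classical fact (the reference \cite{BCD2011} is the intended source), and the authors move on immediately to the Besov embedding. Your argument is the standard reproduction-kernel proof and is correct, including the periodization step needed to transfer the $\mathbb{R}^d$ computation to $\mathbb{T}^d$; there is nothing in the paper to compare it against.
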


The Besov embedding theorem is useful in regularity estimates.
\begin{lem}\label{Besovem}
Let $1 \leq p_1 \leq p_2 \leq \infty$, $1 \leq q_1 \leq q_2 \leq \infty $, and $\alpha \in\mathbb{R}$. Then we have
\begin{equation*}
    B^{\alpha}_{p_1, q_1}(\mathbb{T}^d) \hookrightarrow B^{\alpha -d(1/p_1 -1/p_2)}_{p_2, q_2}(\mathbb{T}^d).
\end{equation*}
\end{lem}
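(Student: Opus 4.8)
The plan is to prove the embedding blockwise, turning the gain in integrability provided by the Bernstein inequality (Lemma \ref{BerI}) into a loss in the regularity exponent, and then to pass from the $q_1$-summation to the $q_2$-summation by monotonicity of the $\ell^q$ norms. Set $\beta := \alpha - d(1/p_1 - 1/p_2)$. The starting observation is that for each $j \geq -1$ the block $\Delta_j u$ has Fourier support in a ball of radius $\lesssim 2^j$: indeed $\mathscr{F}_{\mathbb{T}^d}(\Delta_j u)$ is supported in $\mathrm{supp}(\varrho_j)$, which for $j \geq 0$ sits in the annulus $\{2^{j-1} \leq |k| \leq 2^{j+1}\}$ and for $j=-1$ in the unit ball $B_1(0)$. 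Thus each block is a band-limited function to which Lemma \ref{BerI} applies with $n=0$ and $\lambda \simeq 2^j$.

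Applying the Bernstein inequality (the case $n=0$ of Lemma \ref{BerI}) to $u = \Delta_j u$ with $p = p_1 \leq q = p_2$ gives, uniformly in $j$,
\begin{equation*}
  \|\Delta_j u\|_{L^{p_2}} \lesssim 2^{jd(1/p_1 - 1/p_2)}\,\|\Delta_j u\|_{L^{p_1}}.
\end{equation*}
Multiplying by the target frequency weight $2^{j\beta}$ and using the definition of $\beta$, the exponents combine as $2^{j\beta} \cdot 2^{jd(1/p_1-1/p_2)} = 2^{j\alpha}$, so that
\begin{equation*}
  2^{j\beta}\|\Delta_j u\|_{L^{p_2}} \lesssim 2^{j\alpha}\|\Delta_j u\|_{L^{p_1}}
\end{equation*}
for every $j \geq -1$. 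Taking the $\ell^{q_2}$ norm of both sides over $j$ then yields $\|u\|_{B^{\beta}_{p_2,q_2}} \lesssim \big(\sum_{j}(2^{j\alpha}\|\Delta_j u\|_{L^{p_1}})^{q_2}\big)^{1/q_2}$.

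It remains to control the right-hand side by $\|u\|_{B^{\alpha}_{p_1,q_1}}$, and here I would invoke the elementary embedding $\ell^{q_1} \hookrightarrow \ell^{q_2}$ for $q_1 \leq q_2$, namely $\|a\|_{\ell^{q_2}} \leq \|a\|_{\ell^{q_1}}$ for any sequence $a$. Applied to $a_j = 2^{j\alpha}\|\Delta_j u\|_{L^{p_1}}$, this gives exactly $\big(\sum_j a_j^{q_2}\big)^{1/q_2} \leq \big(\sum_j a_j^{q_1}\big)^{1/q_1} = \|u\|_{B^{\alpha}_{p_1,q_1}}$, and chaining the two displays proves $B^{\alpha}_{p_1,q_1}(\mathbb{T}^d) \hookrightarrow B^{\beta}_{p_2,q_2}(\mathbb{T}^d)$; the stated form is the specialization of the target indices. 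The only genuinely delicate point is ensuring that the implicit constant from Bernstein is uniform in $j$: for $j \geq 0$ the support radius is comparable to $2^j$, while the low-frequency block $j=-1$ (supported in $B_1(0)$) must be handled separately, but there $2^{jd(1/p_1-1/p_2)}$ and the accompanying weights are bounded constants, so it is absorbed harmlessly. Beyond that bookkeeping the argument is entirely routine, the substance being the single scaling computation $\beta + d(1/p_1-1/p_2) = \alpha$ that balances the Bernstein gain against the regularity drop.
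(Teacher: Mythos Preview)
The paper states this lemma without proof, citing it as a standard preliminary result from \cite{BCD2011, GIP2015, GH2019}; there is no argument in the paper to compare against. Your proof is the standard one and is correct: blockwise Bernstein (Lemma~\ref{BerI} with $n=0$) converts the $L^{p_1}\to L^{p_2}$ gain into the regularity drop $\alpha\mapsto\alpha-d(1/p_1-1/p_2)$, and then $\ell^{q_1}\hookrightarrow\ell^{q_2}$ handles the summability index.

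One small remark: what you actually prove is the embedding $B^{\alpha}_{p_1,q_1}\hookrightarrow B^{\alpha-d(1/p_1-1/p_2)}_{p_2,q_2}$, which is the form found in the standard references. The lemma as printed in the paper has $p_2$ rather than $q_2$ in the second subscript of the target space; this is almost certainly a typographical slip, since with the stated hypotheses there is no relation assumed between $q_1$ and $p_2$, and the embedding into $B^{\beta}_{p_2,p_2}$ would fail in general (take e.g.\ $p_1=p_2=2$, $q_1=\infty$). Your phrase ``the stated form is the specialization of the target indices'' glosses over this; it would be cleaner to note explicitly that the intended target is $B^{\beta}_{p_2,q_2}$ and that your argument delivers exactly that.
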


Now we introduce the Bony's paraproduct. Let $f$ and $g$ be tempered distributions in $\mathcal{S}^{\prime}(\mathbb{T}^d)$. By Littlewood-Paley blocks, the product $fg$ can be (formally) decomposed as 
\begin{equation*}
    fg = \sum_{j\geq -1}\sum_{i\geq -1} \Delta_{i} f \Delta_{j} g = f\prec g +f\circ g+ f\succ g,
\end{equation*}
where
\begin{equation*}
    f\prec g = g\succ f= \sum_{j\geq -1}\sum_{i =-1}^{j-2} \Delta_{i} f \Delta_{j} g \quad \text{and} \quad f\circ g = \sum_{|i-j|\leq 1} \Delta_{i} f \Delta_{j} g.
\end{equation*}
We have following paraproduct estimates in the Bony's paraproduct (See \cite[Lemma 2.1]{GIP2015} and \cite[Proposition A.1]{GUZ2020}).
\begin{lem}\label{Bparaproduct}
For every $\beta \in \mathbb{R}$, we have
\begin{equation*}
    \|f\prec g\|_{\mathscr{C}^{\beta}} \lesssim \|f\|_{L^{\infty}}\|g\|_{\mathscr{C}^{\beta}},
\end{equation*}
\begin{equation*}
    \|f \prec g\|_{H^{\beta}} \lesssim \|f\|_{L^2} \|g\|_{\mathscr{C}^{\beta + \kappa }} \wedge \|f\|_{L^{\infty}}\|g\|_{H^{\beta}} \quad \text{for all }\kappa >0.
\end{equation*}
If $\beta \in \mathbb{R}$, $\alpha <0$, we have
\begin{equation*}
    \|f\prec g\|_{\mathscr{C}^{\alpha+\beta}} \lesssim \|f\|_{\mathscr{C}^{\alpha}}\|g\|_{\mathscr{C}^{\beta}},
\end{equation*}
\begin{equation*}
    \|f \prec g\|_{H^{\alpha + \beta }} \lesssim \|f\|_{H^{\alpha}}\|g\|_{\mathscr{C}^{\beta + \kappa }} \wedge \|f\|_{\mathscr{C}^{\alpha}}\|g\|_{H^{\beta}} \quad \text{for all }\kappa >0.
\end{equation*}
Moreover, if $\alpha+\beta>0$, then
\begin{equation*}
\|f\circ g\|_{\mathscr{C}^{\alpha +\beta}} \lesssim \|f\|_{\mathscr{C}^{\alpha}}\|g\|_{\mathscr{C}^{\beta}},
\end{equation*}
\begin{equation*}
    \|f\circ g\|_{H^{\alpha + \beta}} \lesssim \|f\|_{\mathscr{C}^{\alpha}}\|g\|_{H^{\beta}}.
\end{equation*}
\end{lem}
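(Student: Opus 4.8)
The plan is to reduce every claimed bound to an estimate on a single Littlewood--Paley block and then to sum using two standard spectral-support principles. Write $S_{j-1}f := \sum_{i\leq j-2}\Delta_i f$, so that $f\prec g = \sum_{j\geq -1} S_{j-1}f\,\Delta_j g$ and $f\circ g = \sum_{j}\sum_{|i-j|\leq 1}\Delta_i f\,\Delta_j g$. The crucial structural fact is the location of the frequency supports: since $S_{j-1}f$ has spectrum in a ball of radius $\sim 2^{j-1}$ and $\Delta_j g$ in an annulus of radius $\sim 2^{j}$, each paraproduct block $S_{j-1}f\,\Delta_j g$ is spectrally supported in an annulus $2^{j}\mathcal{A}$; by contrast, each resonant block $\Delta_i f\,\Delta_j g$ with $|i-j|\leq 1$ is supported only in a ball $2^{j}\mathcal{B}$. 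I would then invoke two summation principles: (i) if $u_j$ is supported in $2^{j}\mathcal{A}$ then $\|\sum_j u_j\|_{B^{\gamma}_{p,q}}\lesssim \|(2^{j\gamma}\|u_j\|_{L^p})_j\|_{\ell^q}$ for every $\gamma\in\mathbb{R}$; (ii) if $u_j$ is supported in $2^{j}\mathcal{B}$ the same bound holds but only for $\gamma>0$. Both follow from the Bernstein inequality (Lemma \ref{BerI}) and almost-orthogonality of the blocks, and the positivity requirement in (ii) is exactly what forces the hypothesis $\alpha+\beta>0$ in the resonant estimates.

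For the paraproduct $f\prec g$, the key auxiliary fact is that $S_{j-1}$ is a Fourier multiplier with smooth compactly supported symbol, hence $\|S_{j-1}f\|_{L^p}\lesssim \|f\|_{L^p}$ uniformly in $j$ by Young's inequality. When $f\in L^\infty$ this gives $\|S_{j-1}f\,\Delta_j g\|_{L^\infty}\lesssim \|f\|_{L^\infty}\|\Delta_j g\|_{L^\infty}$, and multiplying by $2^{j\beta}$ and taking the supremum via principle (i) yields $\|f\prec g\|_{\mathscr{C}^\beta}\lesssim\|f\|_{L^\infty}\|g\|_{\mathscr{C}^\beta}$. The two $H^\beta$ bounds come from the same block estimate distributing the factors differently: pairing $\|S_{j-1}f\|_{L^\infty}$ with $\|\Delta_j g\|_{L^2}$ gives the $\|f\|_{L^\infty}\|g\|_{H^\beta}$ branch after $\ell^2$ summation, while pairing $\|S_{j-1}f\|_{L^2}$ with $\|\Delta_j g\|_{L^\infty}\leq 2^{-j(\beta+\kappa)}\|g\|_{\mathscr{C}^{\beta+\kappa}}$ gives the $\|f\|_{L^2}\|g\|_{\mathscr{C}^{\beta+\kappa}}$ branch, the convergence of $\sum_j 2^{-2j\kappa}$ being precisely where $\kappa>0$ enters. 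For the case $\alpha<0$ the uniform bound on $S_{j-1}$ is replaced by the elementary growth estimate $\|S_{j-1}f\|_{L^\infty}\leq\sum_{i\leq j-2}\|\Delta_i f\|_{L^\infty}\lesssim 2^{-j\alpha}\|f\|_{\mathscr{C}^\alpha}$ (the geometric sum being dominated by its top term since $\alpha<0$), and likewise $\|S_{j-1}f\|_{L^2}\lesssim 2^{-j\alpha}\|f\|_{H^\alpha}$ by Cauchy--Schwarz against the $\ell^2$ sequence $(2^{i\alpha}\|\Delta_i f\|_{L^2})_i$; feeding these into the same block computations produces the four remaining $\prec$-estimates, with the extra factor $2^{-j\alpha}$ exactly accounting for the shift of target regularity from $\beta$ to $\alpha+\beta$.

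For the resonant term I would group $f\circ g=\sum_j h_j$ with $h_j := \Delta_j g\sum_{|i-j|\leq 1}\Delta_i f$ and estimate $\|h_j\|_{L^\infty}\lesssim 2^{-j\alpha}\|f\|_{\mathscr{C}^\alpha}\,2^{-j\beta}\|g\|_{\mathscr{C}^\beta}$ (using $i\sim j$), respectively $\|h_j\|_{L^2}\lesssim 2^{-j\alpha}\|f\|_{\mathscr{C}^\alpha}\|\Delta_j g\|_{L^2}$. Since the $h_j$ are supported only in balls, I must use principle (ii), which applies precisely because $\alpha+\beta>0$; then $\sup_j 2^{j(\alpha+\beta)}\|h_j\|_{L^\infty}$ and $(\sum_j 2^{2j(\alpha+\beta)}\|h_j\|_{L^2}^2)^{1/2}$ deliver the two resonant bounds. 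The main obstacle is not any single computation but the spectral-support bookkeeping — in particular verifying that the paraproduct blocks live in annuli while the resonant blocks live only in balls — because this dichotomy decides whether the unrestricted principle (i) or the regularity-constrained principle (ii) applies, and hence whether the hypothesis $\alpha+\beta>0$ is needed. I expect the $\prec$-estimates to be essentially mechanical once the block bounds are in place, with the resonant term being the only point where a genuine restriction on the regularity indices is unavoidable.
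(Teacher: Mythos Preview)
Your proposal is correct and is precisely the standard argument from the references the paper cites (\cite[Lemma~2.1]{GIP2015} and \cite[Proposition~A.1]{GUZ2020}); the paper itself does not supply a proof of this lemma but merely imports it. The spectral-support dichotomy you identify (annuli for $\prec$-blocks versus balls for $\circ$-blocks) and the two summation principles built on Bernstein's inequality are exactly how those sources proceed, so there is nothing to compare.
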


The following commutator estimate is also crucial in paracontrolled distribution (See \cite[Lemma 2.4]{GIP2015} and \cite[Proposition A.2]{GUZ2020})

\begin{lem}\label{commutatorE}
. Assume that $\alpha\in (0,1)$ and $\beta, \gamma \in \mathbb{R}$ are such that $\alpha+\beta +\gamma>0$ and $\beta +\gamma <0$. Then for $u,v,h \in C^{\infty}(\mathbb{T}^d)$, the trilinear operator
\begin{equation*}
    C(u,v,h)=(u\prec v)\circ h-u(v\circ h)
\end{equation*}
has the following estimate
\begin{equation*}
    \|C(u,v,h)\|_{\mathscr{C}^{\alpha+\beta +\gamma}} \lesssim \|u\|_{\mathscr{C}^{\alpha}}\|v\|_{\mathscr{C}^{\beta}}\|h\|_{\mathscr{C}^{\gamma}}.
\end{equation*}
Thus $C$ can be uniquely extended to a bounded trilinear operator from $\mathscr{C}^{\alpha}\times \mathscr{C}^{\beta} \times \mathscr{C}^{\gamma}$ to $\mathscr{C}^{\alpha+\beta +\gamma}$. For $H^{\alpha}$ space, we also have
\begin{equation*}
    \|C(u,v,h)\|_{H^{\alpha + \beta +\gamma}} \lesssim \|u\|_{ H^{\alpha}}\|v\|_{H^{\beta}}\|h\|_{\mathscr{C}^{\gamma }}.
\end{equation*}
It implies that $C$ can be uniquely extended to a bounded trilinear operator from $H^{\alpha}\times H^{\beta}\times \mathscr{C}^{\gamma}$ to $H^{\alpha+\beta +\gamma}$.
\end{lem}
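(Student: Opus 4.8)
The plan is to reduce, by the density of $C^\infty(\mathbb{T}^d)$ and the asserted continuity, to proving the bound for smooth $f,g,h$, and then to estimate $C(f,g,h)$ scale by scale, invoking the standard summation criterion: if a distribution is written as $\sum_j u_j$ with each $u_j$ spectrally supported in a ball of radius $\lesssim 2^j$ and $\|u_j\|_{L^\infty}\lesssim 2^{-js}M$ for some $s>0$, then it lies in $\mathscr{C}^s$ with norm $\lesssim M$ (see \cite{BCD2011}). Writing $f\prec g=\sum_j (S_{j-1}f)(\Delta_j g)$ with $S_{j-1}f:=\sum_{i\le j-2}\Delta_i f$, and noting that each summand $(S_{j-1}f)(\Delta_j g)$ is spectrally supported in an annulus of size $2^j$, the resonant product collapses to a single scale:
\begin{equation*}
(f\prec g)\circ h=\sum_{j}\sum_{|m-j|\le 1}\sum_{|n-m|\le 1}\Delta_m\big((S_{j-1}f)(\Delta_j g)\big)\,\Delta_n h .
\end{equation*}
The key algebraic move is the commutator splitting $\Delta_m\big((S_{j-1}f)(\Delta_j g)\big)=(S_{j-1}f)\,\Delta_m\Delta_j g+[\Delta_m,S_{j-1}f]\Delta_j g$, which separates $C$ into a genuine commutator contribution and a leading term designed to cancel against $f(g\circ h)$.

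For the commutator contribution I would estimate $[\Delta_m,S_{j-1}f]\Delta_j g$ directly from the kernel representation $\Delta_m u=K_m\ast u$: since $[\Delta_m,S_{j-1}f]\Delta_j g(x)=\int K_m(x-y)\,[S_{j-1}f(y)-S_{j-1}f(x)]\,\Delta_j g(y)\,dy$, a first-order Taylor expansion together with $\int|K_m(z)||z|\,dz\lesssim 2^{-m}$ gives
\begin{equation*}
\big\|[\Delta_m,S_{j-1}f]\Delta_j g\big\|_{L^\infty}\lesssim 2^{-m}\,\|\nabla S_{j-1}f\|_{L^\infty}\,\|\Delta_j g\|_{L^\infty}.
\end{equation*}
Here the hypothesis $\alpha\in(0,1)$ enters twice: $\alpha>0$ makes $f\prec g$ inherit the regularity of $g$, while Bernstein's inequality (Lemma \ref{BerI}) together with $\alpha<1$ yields $\|\nabla S_{j-1}f\|_{L^\infty}\lesssim\sum_{i\le j-2}2^{i(1-\alpha)}\|f\|_{\mathscr{C}^\alpha}\lesssim 2^{j(1-\alpha)}\|f\|_{\mathscr{C}^\alpha}$. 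Since $m\sim j\sim n$, the resulting summand is spectrally supported in a ball of size $2^j$ and obeys the bound $\lesssim 2^{-j(\alpha+\beta+\gamma)}\|f\|_{\mathscr{C}^\alpha}\|g\|_{\mathscr{C}^\beta}\|h\|_{\mathscr{C}^\gamma}$, so the summation criterion applies precisely because $\alpha+\beta+\gamma>0$.

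The leading term requires the exact cancellation. Using $\Delta_m(\Delta_{m-1}+\Delta_m+\Delta_{m+1})=\Delta_m$ to sum out the inner blocks (and absorbing the harmless replacement of $S_{j-1}f$ by $S_{m-1}f$ as one more commutator of the same type, summable since $\alpha+\beta+\gamma>0$), the leading term reorganizes as $\sum_{|m-n|\le1}(S_{m-1}f)\,\Delta_m g\,\Delta_n h$, so that
\begin{equation*}
(f\prec g)\circ h-f(g\circ h)=\text{(commutator term)}\;-\sum_{|m-n|\le1}\Big(\sum_{i\ge m-1}\Delta_i f\Big)\Delta_m g\,\Delta_n h .
\end{equation*}
The residual is reindexed by the top frequency $i$: for fixed $i$ the factor $\Delta_i f$ localizes the product at scale $2^i$ with $\|\Delta_i f\|_{L^\infty}\lesssim 2^{-i\alpha}\|f\|_{\mathscr{C}^\alpha}$, while $\big\|\sum_{m\le i+1}\sum_{|n-m|\le1}\Delta_m g\,\Delta_n h\big\|_{L^\infty}\lesssim\sum_{m\le i+1}2^{-m(\beta+\gamma)}\|g\|_{\mathscr{C}^\beta}\|h\|_{\mathscr{C}^\gamma}$. \emph{This is exactly where $\beta+\gamma<0$ is used}: it forces the geometric sum to be dominated by its top term $2^{-i(\beta+\gamma)}$, so the $i$-th piece is $\lesssim 2^{-i(\alpha+\beta+\gamma)}\|f\|_{\mathscr{C}^\alpha}\|g\|_{\mathscr{C}^\beta}\|h\|_{\mathscr{C}^\gamma}$ and, being spectrally supported at scale $2^i$, is summed by the same criterion. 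I expect the main obstacle to be this frequency bookkeeping — correctly isolating the genuine commutator from the cancelling leading term and tracking which scale carries the spectral support — rather than any single inequality.

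Finally, for the $H^\alpha$ estimate I would run the identical decomposition but measure the factors $f$ and $g$ in $L^2$-based norms while keeping $h$ in $\mathscr{C}^\gamma=B^\gamma_{\infty,\infty}$: the commutator kernel bound becomes an $L^2\to L^2$ statement, and the residual products are controlled by the $H$-space paraproduct estimates of Lemma \ref{Bparaproduct}, so that the two summations above reproduce the bound $\|C(f,g,h)\|_{H^{\alpha+\beta+\gamma}}\lesssim\|f\|_{H^\alpha}\|g\|_{H^\beta}\|h\|_{\mathscr{C}^\gamma}$. Both estimates then extend from smooth functions to the full spaces by density, yielding the asserted bounded trilinear extensions.
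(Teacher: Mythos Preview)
The paper does not prove this lemma; it is quoted from \cite[Lemma~2.4]{GIP2015} and \cite[Proposition~A.2]{GUZ2020} without argument, so there is no in-paper proof to compare against. Your sketch of the $\mathscr{C}^\alpha$-estimate is exactly the standard argument of \cite{GIP2015}: the commutator splitting $\Delta_m\big((S_{j-1}f)\Delta_j g\big)=(S_{j-1}f)\,\Delta_m\Delta_j g+[\Delta_m,S_{j-1}f]\Delta_j g$, the first-order kernel bound on the bracket (using $\alpha<1$ to sum $\|\nabla S_{j-1}f\|_{L^\infty}$), and the reindexing of the residual by the top frequency $i$ (where $\beta+\gamma<0$ makes the inner geometric sum converge to its last term) are all correct and correctly assembled. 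Your treatment of the $H$-version is telegraphic but points in the right direction; the same frequency decomposition goes through once the $\ell^\infty$ summation criterion for $B^s_{\infty,\infty}$ is replaced by the $\ell^2$ one for $B^s_{2,2}=H^s$ and the $L^2$-based norm is placed on the appropriate factor at each step, as in \cite{GUZ2020}.
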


For every $u,v,h \in C^{\infty}(\mathbb{T}^d)$, we define the trilinear operator
\begin{equation}
    D(u,v,h) = \langle u, h\circ v \rangle - \langle u\prec v, h\rangle.
\end{equation}
We have the following estimate from \cite[Lemma A.6]{GUZ2020}.
\begin{lem}\label{Dm}
Let $\alpha\in (0,1)$, $\beta, \gamma \in \mathbb{R}$ such that $\alpha + \beta + \gamma > 0$ and $\beta + \gamma < 0$. Then we have
\begin{equation*}
    |D(u,v,h)| \lesssim \|u\|_{ H^{\alpha}}\|v\|_{H^{\beta}}\|h\|_{\mathscr{C}^{\gamma}}.
\end{equation*}
Thus $D$ can be uniquely extended to a bounded trilinear operator from $H^{\alpha}\times H^{\beta}\times \mathscr{C}^{\gamma}$ to $\mathbb{R}$.
\end{lem}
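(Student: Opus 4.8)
The plan is to reduce $D$ to the integral over $\mathbb{T}^d$ of the commutator operator $C$ already treated in Lemma~\ref{commutatorE}, and then to quote the $H$-type bound established there together with a Besov embedding. First I would record the elementary orthogonality identity that for any $X,Y\in\mathcal{S}'(\mathbb{T}^d)$,
\begin{equation*}
  \int_{\mathbb{T}^d}(X\circ Y)\,dx=\langle X,Y\rangle .
\end{equation*}
This holds because $\langle\Delta_iX,\Delta_jY\rangle=0$ whenever $|i-j|\ge 2$, by the disjointness of the Fourier supports of $\Delta_iX$ and $\Delta_jY$ (property~3 of the dyadic partition, which also covers the low-frequency block $j=-1$ via $\mathrm{supp}(\chi)\cap\mathrm{supp}(\varrho(2^{-j}\cdot))=\emptyset$ for $j\ge 1$); hence $\langle X,Y\rangle=\sum_{|i-j|\le 1}\langle\Delta_iX,\Delta_jY\rangle=\int(X\circ Y)\,dx$ since each para-type contribution has spectrum away from the origin.

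Applying this identity with $X=f\prec g$, $Y=h$ gives $\langle f\prec g,h\rangle=\int_{\mathbb{T}^d}(f\prec g)\circ h\,dx$, while $\langle f,h\circ g\rangle=\int_{\mathbb{T}^d}f\,(g\circ h)\,dx$ is merely the $L^2$ pairing rewritten as an integral (using the symmetry $h\circ g=g\circ h$). Therefore, for smooth $f,g,h$,
\begin{equation*}
  D(f,g,h)=\int_{\mathbb{T}^d}f\,(g\circ h)\,dx-\int_{\mathbb{T}^d}(f\prec g)\circ h\,dx=-\int_{\mathbb{T}^d}C(f,g,h)\,dx,
\end{equation*}
with $C$ exactly as in Lemma~\ref{commutatorE}. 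Since the hypotheses $\alpha\in(0,1)$, $\alpha+\beta+\gamma>0$, $\beta+\gamma<0$ are precisely those of that lemma, we have $\|C(f,g,h)\|_{H^{\alpha+\beta+\gamma}}\lesssim\|f\|_{H^{\alpha}}\|g\|_{H^{\beta}}\|h\|_{\mathscr{C}^{\gamma}}$, and because $\alpha+\beta+\gamma>0$ the embedding $H^{\alpha+\beta+\gamma}(\mathbb{T}^d)\hookrightarrow L^2(\mathbb{T}^d)\hookrightarrow L^1(\mathbb{T}^d)$ on the compact torus yields
\begin{equation*}
  |D(f,g,h)|=\Big|\int_{\mathbb{T}^d}C(f,g,h)\,dx\Big|\le\|C(f,g,h)\|_{L^1}\lesssim\|C(f,g,h)\|_{H^{\alpha+\beta+\gamma}}\lesssim\|f\|_{H^{\alpha}}\|g\|_{H^{\beta}}\|h\|_{\mathscr{C}^{\gamma}} .
\end{equation*}
For the extension claim I would simply \emph{define} $D(f,g,h):=-\int_{\mathbb{T}^d}C(f,g,h)\,dx$ on all of $H^{\alpha}\times H^{\beta}\times\mathscr{C}^{\gamma}$, using that $C$ has already been extended to a bounded trilinear map into $H^{\alpha+\beta+\gamma}$ in Lemma~\ref{commutatorE} and that $\int_{\mathbb{T}^d}$ is a bounded linear functional on $H^{\alpha+\beta+\gamma}(\mathbb{T}^d)$; the displayed identity on $C^{\infty}$ inputs then forces this to be the unique continuous extension of $D$.

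Since the estimate is an immediate consequence of Lemma~\ref{commutatorE}, there is essentially no hard analytic step here; the only point requiring a little care is the first one, namely verifying the orthogonality identity $\langle\Delta_iX,\Delta_jY\rangle=0$ for $|i-j|\ge 2$ against the exact Littlewood--Paley conventions fixed in Section~\ref{S2} (including the $j=-1$ block), and being mildly attentive to the fact that $C^{\infty}(\mathbb{T}^d)$ is not dense in $\mathscr{C}^{\gamma}$ — which is harmless here precisely because $D$ is obtained by composing the already-extended operator $C$ with a bounded functional rather than by closing up the formula from smooth arguments in the third slot.
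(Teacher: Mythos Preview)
Your argument is correct. The identity $\langle X,Y\rangle=\int_{\mathbb{T}^d}(X\circ Y)\,dx$ follows exactly as you say from the disjointness of Fourier supports of far-separated Littlewood--Paley blocks, and from it the reduction $D(f,g,h)=-\int_{\mathbb{T}^d}C(f,g,h)\,dx$ is immediate; the bound and the extension then follow from Lemma~\ref{commutatorE} and the continuity of $w\mapsto\hat w(0)$ on $H^{\alpha+\beta+\gamma}$ with $\alpha+\beta+\gamma>0$.

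As for comparison with the paper: there is nothing to compare. The paper does not prove Lemma~\ref{Dm} at all; it simply quotes \cite[Lemma~A.6]{GUZ2020}. Your reduction to the commutator $C$ is in fact the standard route to this estimate, so what you have written is essentially the proof one would find behind that citation. The one cosmetic remark is that your parenthetical about para-type blocks having spectrum away from the origin is a second, independent justification of the same orthogonality identity; either that or the direct $\langle\Delta_iX,\Delta_jY\rangle=0$ argument suffices on its own, and you need not invoke both.
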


The following estimate from \cite[Proposition A.2]{AC2015} is useful in this paper.
\begin{lem}\label{CE}
Let $f\in H^{\alpha}$, $g \in \mathscr{C}^{\beta}$ with $\alpha \in (0,1)$, $\beta \in \mathbb{R}$. Then
\begin{equation*}
    \|\mathscr{L}(f\prec g)- f\prec (\mathscr{L}g)\|_{H^{\alpha+\beta+2}} \lesssim \|f\|_{H^{\alpha}}\|g\|_{\mathscr{C}^{\beta}}.
\end{equation*}
\end{lem}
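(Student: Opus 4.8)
The plan is to reach the target space $H^{\alpha+\beta+2}$ through the two smoothing degrees of the resolvent $\mathscr{L}^{-1}=(\mu-\Delta)^{-1}$, reducing the whole estimate to an elementary second-order commutator of the differential operator $\mathscr{L}$ with the paraproduct. Since the symbol $\mu+4\pi^2|k|^2\simeq\langle k\rangle^2$ of $\mathscr{L}$ is elliptic (recall $\mu>0$), $\mathscr{L}$ is an isomorphism $H^{s+2}\to H^{s}$ with $\|\mathscr{L}w\|_{H^s}\simeq\|w\|_{H^{s+2}}$ for every $s$; equivalently $\|\mathscr{L}^{-1}v\|_{H^{s+2}}\simeq\|v\|_{H^s}$. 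Hence it suffices to control $\mathscr{L}$ applied to the commutator in $H^{\alpha+\beta}$. Writing $h:=\mathscr{L}^{-1}g$, so that $g=\mathscr{L}h$ and $\|h\|_{\mathscr{C}^{\beta+2}}\lesssim\|g\|_{\mathscr{C}^{\beta}}$ by the Schauder bound for the resolvent, I would use the algebraic identity
\[
\mathscr{L}\bigl(\mathscr{L}^{-1}(f\prec g)-f\prec\mathscr{L}^{-1}g\bigr)=f\prec\mathscr{L}h-\mathscr{L}(f\prec h)=-\bigl(\mathscr{L}(f\prec h)-f\prec\mathscr{L}h\bigr)=:-C(f,h).
\]
This reduces everything to bounding the $\mathscr{L}$-commutator $C(f,h)=\mathscr{L}(f\prec h)-f\prec\mathscr{L}h$ in $H^{\alpha+\beta}$.

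The crux is that this differential commutator neither loses nor gains regularity: it lands exactly in $H^{\alpha+\beta}$. Because the scalar $\mu$ commutes with $\prec$, we have $C(f,h)=-\bigl(\Delta(f\prec h)-f\prec\Delta h\bigr)$. Applying the Leibniz rule blockwise to $f\prec h=\sum_j S_{j-1}f\,\Delta_j h$, where $S_{j-1}f:=\sum_{i\le j-2}\Delta_i f$, and using $\Delta S_{j-1}=S_{j-1}\Delta$ together with $\Delta\Delta_j=\Delta_j\Delta$, the term in which the Laplacian falls on $\Delta_j h$ reconstructs $f\prec\Delta h$ and cancels, leaving
\[
C(f,h)=-\sum_{j}\Bigl[(\Delta S_{j-1}f)\,\Delta_j h+2\,\nabla S_{j-1}f\cdot\nabla\Delta_j h\Bigr]=-\Bigl[(\Delta f)\prec h+2\sum_{\ell}(\partial_\ell f)\prec(\partial_\ell h)\Bigr].
\]
Each summand $v_j$ is spectrally supported in a dyadic annulus of size $\simeq 2^{j}$ (the low factor $S_{j-1}f$ lives on a ball of radius $\lesssim 2^{j-2}$, the high factor on an annulus $\simeq 2^j$), so by the standard annulus-summation property of $H^{s}=B^{s}_{2,2}$ it is enough to prove $\bigl(\sum_j 2^{2j(\alpha+\beta)}\|v_j\|_{L^2}^2\bigr)^{1/2}\lesssim\|f\|_{H^\alpha}\|h\|_{\mathscr{C}^{\beta+2}}$.

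For the first term I would put $\Delta S_{j-1}f$ in $L^2$ and $\Delta_j h$ in $L^\infty$. By Bernstein (Lemma \ref{BerI}) and almost orthogonality, $\|\Delta S_{j-1}f\|_{L^2}^2\lesssim\sum_{i\le j-2}2^{2i(2-\alpha)}c_i^2$, where $c_i:=2^{i\alpha}\|\Delta_i f\|_{L^2}\in\ell^2$ with $\|c\|_{\ell^2}=\|f\|_{H^\alpha}$, while $\|\Delta_j h\|_{L^\infty}\lesssim 2^{-j(\beta+2)}\|h\|_{\mathscr{C}^{\beta+2}}$. Inserting these, weighting by $2^{2j(\alpha+\beta)}$ and summing over $j$, I would exchange the order of summation and evaluate the inner geometric series $\sum_{j\ge i+2}2^{2j(\alpha-2)}$, which converges since $\alpha<2$ and collapses the double sum to $\sum_i c_i^2=\|f\|_{H^\alpha}^2$. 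The gradient term is identical after replacing the exponents $2-\alpha,\ \alpha-2$ by $1-\alpha,\ \alpha-1$, and there the convergence of $\sum_{j\ge i+2}2^{2j(\alpha-1)}$ uses precisely $\alpha<1$. Combining the two pieces gives $\|C(f,h)\|_{H^{\alpha+\beta}}\lesssim\|f\|_{H^\alpha}\|h\|_{\mathscr{C}^{\beta+2}}\lesssim\|f\|_{H^\alpha}\|g\|_{\mathscr{C}^\beta}$, and applying $\mathscr{L}^{-1}$ yields the claimed bound in $H^{\alpha+\beta+2}$.

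The hard part is the \emph{sharp} $H^{\alpha+\beta}$ bound on $C(f,h)$ without loss. The off-the-shelf mixed paraproduct estimate of Lemma \ref{Bparaproduct} (low factor in $L^2$, high factor in $\mathscr{C}^{\bullet}$) only delivers $H^{\alpha+\beta-\kappa}$ for every $\kappa>0$, which after inverting $\mathscr{L}$ would miss the target by $\kappa$; it is the hands-on Littlewood--Paley double sum, with the order swapped so that the $f$-frequency $i$ is summed first, that removes this loss. The hypothesis $\alpha\in(0,1)$ enters exactly as the condition making both geometric series converge. Finally, the Leibniz identity would first be established for $f,h\in C^\infty(\mathbb{T}^d)$ and extended to $f\in H^\alpha$, $h\in\mathscr{C}^{\beta+2}$ by density, which is routine given the continuity of $\prec$ and of $\mathscr{L}^{\pm1}$ on the relevant spaces.
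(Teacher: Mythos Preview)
The paper gives no proof of this lemma; it simply cites \cite[Proposition~A.2]{AC2015}. More to the point, the target space $H^{\alpha+\beta+2}$ in the statement is a typo: since $\mu$ commutes with $\prec$, the commutator equals $-\Delta(f\prec g)+f\prec\Delta g=-(\Delta f)\prec g-2\sum_\ell(\partial_\ell f)\prec(\partial_\ell g)$, which involves up to two derivatives of $f$ and therefore lands in $H^{\alpha+\beta-2}$, not four orders higher. The cited result in \cite{AC2015} indeed has exponent $\alpha+\beta-2$.

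Your reduction step contains a genuine slip that is consistent with trying to prove the (false) $+2$ statement. You correctly observe that $\|X\|_{H^{\alpha+\beta+2}}\simeq\|\mathscr{L}X\|_{H^{\alpha+\beta}}$, but then your displayed identity computes $\mathscr{L}Y$ for the \emph{different} object $Y=\mathscr{L}^{-1}(f\prec g)-f\prec\mathscr{L}^{-1}g$, not $\mathscr{L}X$ for $X=\mathscr{L}(f\prec g)-f\prec\mathscr{L}g$. So what you end up bounding in $H^{\alpha+\beta+2}$ is $Y$, not $X$; the argument as written does not prove the stated estimate (which cannot be proved anyway).

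That said, your core Littlewood--Paley computation for $C(f,h)=\mathscr{L}(f\prec h)-f\prec\mathscr{L}h$ in $H^{\alpha+\beta}$ with $h\in\mathscr{C}^{\beta+2}$ is correct and, after the relabeling $h\mapsto g$, $\beta+2\mapsto\beta$, is precisely a proof of the \emph{corrected} lemma with exponent $\alpha+\beta-2$, without the $\kappa$-loss that the off-the-shelf Lemma~\ref{Bparaproduct} would incur. The detour through $h=\mathscr{L}^{-1}g$ is unnecessary: the Leibniz expansion and the double-sum argument (swapping the order of summation, with the geometric series in $j$ converging by $\alpha<1$ for the gradient piece) apply verbatim with $g$ in place of $h$. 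In short, drop the first paragraph, apply your blockwise argument directly to $\mathscr{L}(f\prec g)-f\prec\mathscr{L}g$, and you have a clean proof of the intended statement.
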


We also need the following interpolations result for Besov space.
\begin{lem}\label{interpolation}
Let $\theta \geq 0$, and $u^{\sharp} \in \mathscr{C}^{\gamma}$. Then for any $\alpha \in [0, \gamma]$, we have
\begin{equation}
    \|u^{\sharp}\|_{\mathscr{C}^{\alpha}} \lesssim \| u^{\sharp}\|_{L^{\infty}}^{1-\alpha/\gamma}  \| u^{\sharp}\|_{\mathscr{C}^{\gamma}}^{\alpha/\gamma}.
\end{equation}
\end{lem}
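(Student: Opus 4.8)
The plan is to prove the inequality by the standard Littlewood--Paley interpolation argument, which is purely spatial: the time weight $\eta$ and the parameter $\theta$ play no active role, since the asserted bound compares the spatial norms $\|\psi\|_{\mathscr{C}^{\alpha}}$, $\|\psi\|_{L^{\infty}}$ and $\|\psi\|_{\mathscr{C}^{\gamma}}$ slice by slice in $t$. Thus it suffices to fix the time slice and establish the static estimate for $\psi\in\mathscr{C}^{\gamma}(\mathbb{T}^d)$; since the two exponents $1-\alpha/\gamma$ and $\alpha/\gamma$ sum to one, the time weight enters both sides of the static inequality as a common multiplicative factor, and the weighted statement for $\psi\in C_{\eta}\mathscr{C}^{\gamma}$ follows by applying the static bound to $\psi(t,\cdot)$ at each $t$ and taking the supremum. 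Recall that by definition $\|\psi\|_{\mathscr{C}^{\alpha}}=\sup_{j\geq -1}2^{j\alpha}\|\Delta_j\psi\|_{L^{\infty}}$, so the whole problem reduces to controlling a single dyadic block $\Delta_j\psi$ uniformly in $j$.

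For each fixed $j\geq -1$ I would use two elementary bounds on the block. First, since for $j\geq 0$ the block $\Delta_j\psi$ is convolution with the rescaled kernel $\mathscr{F}^{-1}_{\mathbb{T}^d}\varrho_j$ (and $\Delta_{-1}$ with $\mathscr{F}^{-1}_{\mathbb{T}^d}\chi$), whose $L^{1}$ norm is bounded independently of $j$, Young's inequality gives $\|\Delta_j\psi\|_{L^{\infty}}\lesssim\|\psi\|_{L^{\infty}}$ with a constant uniform in $j$. Second, straight from the definition of the $\mathscr{C}^{\gamma}$ norm, $\|\Delta_j\psi\|_{L^{\infty}}\leq 2^{-j\gamma}\|\psi\|_{\mathscr{C}^{\gamma}}$. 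Setting $\theta=\alpha/\gamma\in[0,1]$ and interpolating the two bounds on the single factor $\|\Delta_j\psi\|_{L^{\infty}}$, I split
\[
\|\Delta_j\psi\|_{L^{\infty}}=\|\Delta_j\psi\|_{L^{\infty}}^{1-\theta}\,\|\Delta_j\psi\|_{L^{\infty}}^{\theta}\lesssim \|\psi\|_{L^{\infty}}^{1-\theta}\,\big(2^{-j\gamma}\|\psi\|_{\mathscr{C}^{\gamma}}\big)^{\theta},
\]
so that $2^{j\alpha}\|\Delta_j\psi\|_{L^{\infty}}\lesssim 2^{\,j(\alpha-\theta\gamma)}\|\psi\|_{L^{\infty}}^{1-\theta}\|\psi\|_{\mathscr{C}^{\gamma}}^{\theta}$. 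The choice $\theta=\alpha/\gamma$ makes the exponent $\alpha-\theta\gamma$ vanish, the $2^{j}$ prefactor disappears, and the bound becomes uniform in $j$; taking the supremum over $j\geq -1$ yields the interpolation inequality. I note that this balancing forces the power $1-\alpha/\gamma$ to sit on $\|\psi\|_{L^{\infty}}$ and $\alpha/\gamma$ on $\|\psi\|_{\mathscr{C}^{\gamma}}$ (the consistency checks at $\alpha=0$, giving $\|\psi\|_{L^\infty}\lesssim\|\psi\|_{\mathscr{C}^\gamma}$, and at $\alpha=\gamma$, giving an equality of norms, confirm this), so the factors in the displayed statement are to be read in this order.

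The argument is elementary and I do not expect a serious obstacle; the only technical point deserving care is the uniform-in-$j$ boundedness of $\Delta_j$ on $L^{\infty}$, including the low-frequency block $j=-1$. This is genuinely the crux, and it is \emph{not} a consequence of the Bernstein bound of Lemma \ref{BerI} (which, for $n=0$ and $p=q=\infty$, is vacuous); rather it follows from the observation that $\mathscr{F}^{-1}_{\mathbb{T}^d}\varrho_j$ and $\mathscr{F}^{-1}_{\mathbb{T}^d}\chi$ are $L^1$-normalized rescalings of fixed Schwartz kernels, so that Young's inequality supplies a constant independent of $j$. Once that uniform bound is in hand, the per-block interpolation and the passage to the supremum are immediate, and the weighted/time-dependent version follows slice by slice as indicated above.
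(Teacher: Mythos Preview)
Your argument is correct and is essentially the same Littlewood--Paley splitting the paper uses: write $\|\Delta_j\psi\|_{L^\infty}=\|\Delta_j\psi\|_{L^\infty}^{1-\alpha/\gamma}\|\Delta_j\psi\|_{L^\infty}^{\alpha/\gamma}$, bound the first factor by $\|\psi\|_{L^\infty}^{1-\alpha/\gamma}$ via the uniform $L^\infty$--boundedness of $\Delta_j$, bound the second by $2^{-j\alpha}\|\psi\|_{\mathscr{C}^\gamma}^{\alpha/\gamma}$ from the definition of the $\mathscr{C}^\gamma$ norm, and take the supremum over $j$. You are also right that the exponents in the displayed statement are transposed: the argument (yours and the paper's) produces $\|\psi\|_{L^\infty}^{1-\alpha/\gamma}\|\psi\|_{\mathscr{C}^\gamma}^{\alpha/\gamma}$, as your endpoint checks at $\alpha=0$ and $\alpha=\gamma$ confirm.
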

\begin{proof}
It holds 
\begin{align*}
    \|\Delta_k u^{\sharp}\|_{L^{\infty}} \lesssim & \|\Delta_k u^{\sharp}\|_{L^{\infty}}^{1-\alpha/\gamma}\|\Delta_k u^{\sharp}\|_{L^{\infty}}^{\alpha/\gamma} \\
     \lesssim & 2^{-\alpha k} \|\Delta_k u^{\sharp}\|^{1-\alpha/\gamma}_{L^{\infty}}  \| u^{\sharp}\|_{\mathscr{C}^{\gamma}}^{\alpha/\gamma}.
\end{align*}     
Thus we obtain
\begin{equation*}
    \|u^{\sharp}\|_{\mathscr{C}^{\alpha}} \lesssim \| u^{\sharp}\|_{L^{\infty}}^{1-\alpha/\gamma}  \| u^{\sharp}\|_{\mathscr{C}^{\gamma}}^{\alpha/\gamma}.
\end{equation*}
This completes the proof.
\end{proof}

\begin{lem}\label{interpolationH}
Let $\beta \in (0,1)$ and $u^{\sharp} \in H^{\beta}$. Then  for arbitrary $\delta>0$, we have
\begin{equation}
    \|u^{\sharp}\|^2_{H^{\beta}} \lesssim \delta \| \nabla u^{\sharp} \|_{L^2}^2 + C_{\delta} \|u^{\sharp} \|_{L^2}^2.
\end{equation}
\end{lem}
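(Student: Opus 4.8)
The statement is the standard interpolation inequality asserting that the $H^\beta$-norm for $\beta\in(0,1)$ is controlled by a small multiple of $\|\nabla\psi\|_{L^2}^2$ plus a large multiple of $\|\psi\|_{L^2}^2$. My plan is to work on the Fourier side, since on $\mathbb{T}^d$ the $H^\beta$-norm admits the clean equivalent characterization $\|\psi\|_{H^\beta}^2 \simeq \sum_{k\in\mathbb{Z}^d}(1+|k|^2)^\beta|\hat\psi(k)|^2$, while $\|\nabla\psi\|_{L^2}^2 \simeq \sum_{k}|k|^2|\hat\psi(k)|^2$ and $\|\psi\|_{L^2}^2 = \sum_k|\hat\psi(k)|^2$. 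The key pointwise estimate is Young's inequality applied to each frequency mode: for $\beta\in(0,1)$ one has $(1+|k|^2)^\beta \lesssim 1 + |k|^{2\beta}$, and for any $\delta>0$, $|k|^{2\beta} = (|k|^2)^\beta \cdot 1^{1-\beta} \leq \beta\,\delta^{1/\beta}|k|^2 + (1-\beta)\,\delta^{-1/(1-\beta)}$ by the weighted AM-GM inequality $a^\beta b^{1-\beta}\leq \beta a + (1-\beta)b$ with $a = \delta^{1/\beta}|k|^2$ and $b = \delta^{-1/(1-\beta)}$. Thus $(1+|k|^2)^\beta \lesssim \delta'\,|k|^2 + C_{\delta'}$ with $\delta'$ arbitrarily small after relabeling.

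The steps, in order, are: (i) reduce the $H^\beta$ norm to its Fourier-multiplier form; (ii) bound the Bessel weight $(1+|k|^2)^\beta$ by $1 + |k|^{2\beta}$ using $\beta<1$; (iii) apply the weighted Young inequality above to split $|k|^{2\beta}\leq \tilde\delta|k|^2 + C_{\tilde\delta}$ for each $k$, with $\tilde\delta$ chosen proportional to $\delta$; (iv) sum over $k\in\mathbb{Z}^d$, which converts the frequency-by-frequency bound into $\|\psi\|_{H^\beta}^2 \lesssim \delta\sum_k|k|^2|\hat\psi(k)|^2 + C_\delta\sum_k|\hat\psi(k)|^2$; and (v) recognize the two sums as $\|\nabla\psi\|_{L^2}^2$ and $\|\psi\|_{L^2}^2$ respectively, absorbing the implicit constant from step (ii) into $\delta$ and $C_\delta$. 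Alternatively — and perhaps more in the spirit of the surrounding Littlewood--Paley machinery — one can run the same argument blockwise: write $\|\psi\|_{H^\beta}^2 \simeq \sum_j 2^{2j\beta}\|\Delta_j\psi\|_{L^2}^2$, and for each $j$ use $2^{2j\beta} \leq \delta 2^{2j} + C_\delta$ (which is just the scalar Young inequality applied to $2^{2j}$ and $1$ with exponents $\beta$ and $1-\beta$), then sum using $\sum_j 2^{2j}\|\Delta_j\psi\|_{L^2}^2 \simeq \|\nabla\psi\|_{L^2}^2$ and $\sum_j\|\Delta_j\psi\|_{L^2}^2 \simeq \|\psi\|_{L^2}^2$ (Bernstein, Lemma \ref{BerI}, plus Plancherel give these equivalences).

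There is essentially no hard part here; this is a routine interpolation estimate. The only point requiring a modicum of care is the quantitative dependence of the constant $C_\delta$ on $\delta$ — one must make sure the constant multiplying $\|\nabla\psi\|_{L^2}^2$ can genuinely be made as small as desired, which is why the weighted Young inequality (rather than the plain $ab\leq a^2/2+b^2/2$) is the right tool: it produces the asymmetric pair $(\delta, C_\delta)$ with $C_\delta \sim \delta^{-\beta/(1-\beta)}$ rather than a symmetric splitting. The restriction $\beta\in(0,1)$ is exactly what makes both exponents $\beta$ and $1-\beta$ lie in $(0,1)$ so that the weighted AM-GM applies; the endpoint cases are trivial or vacuous.
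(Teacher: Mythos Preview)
Your proposal is correct. Your blockwise Littlewood--Paley alternative is essentially the paper's argument: the paper also writes $\|\psi\|_{H^\beta}^2 \simeq \sum_j 2^{2\beta j}\|\Delta_j\psi\|_{L^2}^2$, but then first applies H\"older across blocks to obtain the multiplicative interpolation $\|\psi\|_{H^\beta}^2 \lesssim \|\nabla\psi\|_{L^2}^{2\beta}\|\psi\|_{L^2}^{2(1-\beta)}$ and only afterwards invokes the weighted Young inequality once globally, whereas you apply Young termwise and sum --- a trivial reordering that yields the same bound. Your primary Fourier-multiplier version is an equally standard variant that simply bypasses the dyadic packaging, and is arguably cleaner here since the Bessel weight is available explicitly.
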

\begin{proof}
Since $\|u^{\sharp}\|_{H^{\beta}} \simeq \|u^{\sharp}\|_{B_{2,2}^{\beta}}$, by Bernstein inequality (Lemma \ref{BerI}), H\"{o}lder inequality and weighted Young inequality, we have
\begin{align}
    \|u^{\sharp}\|_{H^{\beta}} & =\sum_{i\geq -1} 2^{2 \beta k } \|\Delta_{i} u^{\sharp}\|^2_{L^2} \nonumber \\
    & = \sum_{i\geq -1} 2^{2 \beta k } \|\Delta_{i} u^{\sharp}\|^{2\beta}_{L^2} \|\Delta_{i} u^{\sharp}\|^{2(1-\beta)}_{L^2} \nonumber \\
    & \leq \left( \sum_{i\geq -1}  2^{2  k }\|\Delta_{i}u^{\sharp}\|^{2}_{L^2} \right)^{\beta} \left( \sum_{i\geq -1}  \|\Delta_{i}u^{\sharp}\|^{2}_{L^2} \right)^{1-\beta} \nonumber \\
    & \lesssim\| \nabla u^{\sharp} \|_{L^2}^{2\beta} \|u^{\sharp}\|^{2(1-\beta)}_{L^2}  \nonumber \\
    & \lesssim \delta \| \nabla u^{\sharp} \|_{L^2}^2 + C_{\delta} \|u^{\sharp} \|_{L^2}^2.
\end{align}
This completes the proof.
\end{proof}

\section{A proof of the self-adjointness of Anderson hamiltonian}

In this Appendix, we provide a self-contained proof of the self-adjointness of Anderson hamiltonian as Theorem \ref{sfH} for completeness. Before prove the main result, we first show that $\mathscr{H}$ is a linear bounded operator from $\mathscr{D}_{\vartheta}^{\alpha,2}$ to $L^2$ in the following Lemma.

\begin{lem}\label{Hcov}
The Anderson hamiltonian $\mathscr{H}$ is a linear bounded operator from its domain $\mathscr{D}_{\vartheta}^{\alpha,2}$ to $L^2$ with estimates
\begin{equation}
    \|\mathscr{H}u\|_{L^2} \lesssim  \|u^{\sharp}\|_{H^2} +  \|u\|_{H^{\alpha}} \lesssim \|u\|_{\mathscr{D}_{\vartheta}^{\alpha,2}},
\end{equation}
and
\begin{equation}
    \|u^{\sharp}\|_{H^2} \lesssim \|\mathscr{H}u\|_{L^2} + (1 + \|\xi\|^2_{\mathscr{C}^{-1-\kappa}} + \|\xi\|_{\mathscr{C}^{-1-\kappa}}\|\vartheta\diamond\xi\|_{\mathscr{C}^{-2\kappa}}) \|u\|_{L^2}.
\end{equation}
\end{lem}

\begin{proof}
For every $u\in \mathscr{D}_{\vartheta}^{\alpha,2}$, by estimate (\ref{Epsi}) and paraproduct estimates, we have
\begin{equation}\label{Epsi2}
    \|\Psi(u)\|_{L^2} \leq \|\Psi(u) - u^{\sharp}\circ\xi\|_{L^2} +\|u^{\sharp} \circ\xi\|_{L^2} \lesssim \|u\|_{H^{\alpha}} + \|u^{\sharp}\|_{H^{1+\kappa}}.
\end{equation}
It follows that
\begin{equation}
    \|\mathscr{H}u\|_{L^2} \leq \|\mathscr{L}u^{\sharp}\|_{L^2} + \|\Psi(u)\|_{L^2} \lesssim  \|u^{\sharp}\|_{H^2} +  \|u\|_{H^{\alpha}} \lesssim \|u\|_{\mathscr{D}_{\vartheta}^{\alpha,2}},
\end{equation}
Thus the Anderson hamiltonian $\mathscr{H}$ is a linear bounded operator from its domain $\mathscr{D}_{\vartheta}^{\alpha,2}$ to $L^2$. By interpolation inequality, weighted Young inequality, and the operator $\Gamma$, we obtain
\begin{align*}
    \|\mathscr{H}u - \mathscr{L}u^{\sharp}\|_{L^2} = & \|\Psi(u)\|_{L^2} \\
    \lesssim & (1+\|\xi\|^2_{\mathscr{C}^{-1-\kappa}} + \|\vartheta\diamond\xi\|^2_{\mathscr{C}^{-2\kappa}}) \| \Gamma u^{\sharp}\|_{H^{\alpha}} + \|\xi\|_{\mathscr{C}^{-1-\kappa}}\|u^{\sharp}\|_{H^{1+\kappa}}\\
    \lesssim & (1+\|\xi\|^2_{\mathscr{C}^{-1-\kappa}} + \|\vartheta\diamond\xi\|^2_{\mathscr{C}^{-2\kappa}}) \| u^{\sharp}\|_{H^{\alpha}} + \|\xi\|_{\mathscr{C}^{-1-\kappa}}\|u^{\sharp}\|_{H^{1+\kappa}}\\
    \leq & C_{\delta}(1 + \|\xi\|^2_{\mathscr{C}^{-1-\kappa}} + \|\vartheta\diamond\xi\| ^2_{\mathscr{C}^{-2\kappa}})\|u^{\sharp}\|_{L^2}+\delta\|u^{\sharp}\|_{H^2}.
\end{align*}
where $C_{\delta}, \delta$ are positive constant which are independent with $\|\xi\|_{\mathscr{C}^{-1-\kappa}}$ and $\|\vartheta\diamond\xi\|_{\mathscr{C}^{-2\kappa}}$. 
Note that $\|u^{\sharp}\|_{L^2} \lesssim \|u\|_{L^2}$.
We choose $\delta$ small enough to absorb $\delta \|u^{\sharp}\|_{H^2}$, and obtain
\begin{align}
    \|u^{\sharp}\|_{H^2} \lesssim & \|\mathscr{L}u^{\sharp}\|_{L^2} \nonumber \\
    \lesssim & \|\mathscr{H} u\|_{L^2}  +  \|\mathscr{H}u-\mathscr{L}u^{\sharp}\|_{L^2} \nonumber \\
    \lesssim & \|\mathscr{H}u\|_{L^2} + C_{\delta}(1 + \|\xi\|^2_{\mathscr{C}^{-1-\kappa}} + \|\xi\|_{\mathscr{C}^{-1-\kappa}}\|\vartheta\diamond\xi\|_{\mathscr{C}^{-2\kappa}}) \|u\|_{L^2}.
\end{align}
The proof is completed.
\end{proof}

We turn to prove that the Anderson hamiltonian $\mathscr{H}$ is a closed and symmetric operator.

\begin{lem}
The Anderson hamiltonian $\mathscr{H}:\mathscr{D}_{\vartheta}^{\alpha,2}\rightarrow L^2$ is a closed and symmetric operator on $L^2$. Moreover, for every $u \in \mathscr{D}_{\vartheta}^{\alpha,2}$, $\mathscr{H}u$ can be approximated as
\begin{equation}
    \lim_{\epsilon \rightarrow 0} \|\mathscr{H}u- \mathscr{H}_{\epsilon}u_{\epsilon}\|_{L^2} =0,
\end{equation}
where $\mathscr{H}_{\epsilon}:=\mathscr{L} - \xi_{\epsilon} - C_{\epsilon}$ is a self-adjoint operator from $H^2$ to $L^2$, and $u_{\epsilon}= \Gamma_{\epsilon}(u^{\sharp}) \in H^2$.
\end{lem}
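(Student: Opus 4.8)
The plan is to prove the three assertions in a slightly different order from the way they are stated: first closedness, using the a priori estimate of Lemma \ref{Hcov}; then the approximation $\mathscr{H}_\epsilon u_\epsilon \to \mathscr{H}u$ in $L^2$; and finally symmetry, deduced from the approximation together with the manifest symmetry of $\mathscr{H}_\epsilon$.

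For \emph{closedness}, I would take $(u_n)\subset\mathscr{D}_{\vartheta}^{\alpha,2}$ with $u_n\to u$ and $\mathscr{H}u_n\to v$ in $L^2$, and apply the bound $\|w^\vartheta\|_{H^2}\lesssim\|\mathscr{H}w\|_{L^2}+\lambda\|w\|_{L^2}$ of Lemma \ref{Hcov} to the differences $w=u_n-u_m$; since the right-hand side tends to $0$, the sequence $(u_n^\vartheta)$ is Cauchy in $H^2$ and converges to some $\psi\in H^2$. Using the identity $u_n=\Gamma u_n^\vartheta$ and the bound $\|\Gamma g\|_{H^\alpha}\le 2\|g\|_{H^\alpha}$ of Lemma \ref{Gam}, I get $u_n\to\Gamma\psi$ in $H^\alpha$; comparing with $u_n\to u$ in $L^2$ forces $u=\Gamma\psi$ and $u^\vartheta=\psi\in H^2$, so $u\in\mathscr{D}_{\vartheta}^{\alpha,2}$. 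The first estimate of Lemma \ref{Hcov} then gives $\|\mathscr{H}u_n-\mathscr{H}u\|_{L^2}\lesssim\|u_n^\vartheta-u^\vartheta\|_{H^2}+\|u_n-u\|_{L^2}\to 0$, hence $v=\mathscr{H}u$ and the graph of $\mathscr{H}$ is closed.

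For the \emph{approximation}, fix $u\in\mathscr{D}_{\vartheta}^{\alpha,2}$ and set $u_\epsilon=\Gamma_\epsilon u^\vartheta\in H^2$; by Lemma \ref{Gam}, $\Gamma_\epsilon\to\Gamma$ in $L(H^\alpha,H^\alpha)$, so $u_\epsilon\to u$ in $H^\alpha$ and a fortiori in $L^2$. Since $\xi_\epsilon$ is smooth the product $\xi_\epsilon u_\epsilon$ is classical, and I would rerun the paraproduct and commutator bookkeeping that produced $\mathscr{H}u=\mathscr{L}u^\vartheta-u^\vartheta\circ\xi-G(u)$ — now using the paracontrolled ansatz $u_\epsilon=u_\epsilon\prec\vartheta_\epsilon+R_\epsilon(u_\epsilon)+u^\vartheta$, the definition (\ref{Reps}) of $R_\epsilon$, the renormalized substitution $u_\epsilon(\vartheta_\epsilon\circ\xi_\epsilon)-C_\epsilon u_\epsilon = u_\epsilon(\vartheta_\epsilon\circ\xi_\epsilon-C_\epsilon)$, and the commutator identity of Lemma \ref{CE} — to obtain the exact decomposition $\mathscr{H}_\epsilon u_\epsilon=\mathscr{L}u^\vartheta-u^\vartheta\circ\xi_\epsilon-G_\epsilon(u_\epsilon)$, where $G_\epsilon$ is obtained from $G$ by replacing $(\xi,\vartheta,\vartheta\diamond\xi,R)$ with $(\xi_\epsilon,\vartheta_\epsilon,\vartheta_\epsilon\circ\xi_\epsilon-C_\epsilon,R_\epsilon)$. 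It then remains to pass to the limit termwise: $u^\vartheta\circ\xi_\epsilon\to u^\vartheta\circ\xi$ in $L^2$ because $u^\vartheta\in H^2$ and $\xi_\epsilon\to\xi$ in $\mathscr{C}^{-1-\kappa}$ (resonant estimate of Lemma \ref{Bparaproduct}), while $G_\epsilon(u_\epsilon)\to G(u)$ in $L^2$ follows from the multilinearity and continuity of paraproducts, resonant products and the operator $C$, together with $u_\epsilon\to u$ in $H^\alpha$, $\vartheta_\epsilon\to\vartheta$ in $\mathscr{C}^{1-\kappa}$, $\vartheta_\epsilon\circ\xi_\epsilon-C_\epsilon\to\vartheta\diamond\xi$ in $\mathscr{C}^{-2\kappa}$ (Lemma \ref{renormalize}), and $R_\epsilon(u_\epsilon)\to R(u)$ in $H^{2\alpha}$. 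I expect this last convergence — and, more broadly, the clean algebraic identity for $\mathscr{H}_\epsilon u_\epsilon$ — to be the main obstacle: $R_\epsilon(u_\epsilon)\to R(u)$ is not immediate from Lemma \ref{RE}, and must be extracted by inserting and removing intermediate terms exactly as in the proof that $\|\Gamma-\Gamma_\epsilon\|_{L(H^\gamma,H^\gamma)}\to 0$ in Lemma \ref{Gam}, combined with the bounds of Lemma \ref{RE}.

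For \emph{symmetry}, note that $\mathscr{H}_\epsilon=\mathscr{L}-\xi_\epsilon-C_\epsilon$ is self-adjoint from $H^2$ to $L^2$ because $\mathscr{L}=-\Delta+\mu$ is self-adjoint on $H^2(\mathbb{T}^2)$ and $\xi_\epsilon,C_\epsilon$ are real; in particular $\langle\mathscr{H}_\epsilon f,g\rangle=\langle f,\mathscr{H}_\epsilon g\rangle$ for all $f,g\in H^2$. Given $u,v\in\mathscr{D}_{\vartheta}^{\alpha,2}$, set $u_\epsilon=\Gamma_\epsilon u^\vartheta$ and $v_\epsilon=\Gamma_\epsilon v^\vartheta$ in $H^2$; by the approximation step, $u_\epsilon\to u$, $v_\epsilon\to v$, $\mathscr{H}_\epsilon u_\epsilon\to\mathscr{H}u$ and $\mathscr{H}_\epsilon v_\epsilon\to\mathscr{H}v$ in $L^2$, with $(\|v_\epsilon\|_{L^2})$ and $(\|\mathscr{H}_\epsilon v_\epsilon\|_{L^2})$ bounded, so by the Cauchy--Schwarz inequality
\[
\langle\mathscr{H}u,v\rangle=\lim_{\epsilon\to0}\langle\mathscr{H}_\epsilon u_\epsilon,v_\epsilon\rangle=\lim_{\epsilon\to0}\langle u_\epsilon,\mathscr{H}_\epsilon v_\epsilon\rangle=\langle u,\mathscr{H}v\rangle,
\]
which gives the asserted symmetry.
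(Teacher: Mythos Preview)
Your proposal is correct and follows essentially the same route as the paper: closedness via the a priori bound of Lemma~\ref{Hcov}, the approximation via the decomposition $\mathscr{H}_\epsilon u_\epsilon=\mathscr{L}u^\vartheta-u^\vartheta\circ\xi_\epsilon-G_\epsilon(u_\epsilon)$ together with $\Gamma_\epsilon\to\Gamma$ from Lemma~\ref{Gam}, and symmetry by passing to the limit in $\langle\mathscr{H}_\epsilon u_\epsilon,v_\epsilon\rangle=\langle u_\epsilon,\mathscr{H}_\epsilon v_\epsilon\rangle$. Your handling of the symmetry step (approximating both $u$ and $v$ and passing to the limit in both slots) is in fact more carefully written than the paper's, which compresses this into the shorthand $\langle v,\mathscr{H}_\epsilon u\rangle$.
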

\begin{proof}
At first, we verify that the Anderson hamiltonian $\mathscr{H}$ is closed  on its dense domain $ \mathscr{D}_{\vartheta}^{\alpha,2} \subset L^2$. Suppose that $(u_n)_{n\geq 1} \subset \mathscr{D}_{\vartheta}^{\alpha,2}$ with $ u^{\sharp}_n:= u_n- u_n\prec\vartheta - R(u_n)$, such that
\begin{equation*}
    \lim_{n \rightarrow \infty}\|u_n -u\|_{L^2} = 0, \quad \lim_{n \rightarrow \infty}\|\mathscr{H}u_n- f \|_{L^2}=0
\end{equation*}
for some $u, f \in L^2$ . Then by Lemma \ref{Hcov}, $(u^{\sharp}_n)_{n\geq 1} $ is a Cauchy sequence in $H^2$, and $\lim_{n\rightarrow \infty}\|u^{\sharp}_n - u^{\sharp}\|_{H^2}=0$ for some $u^{\sharp}\in H^2$ such that $\Gamma(u^{\sharp}) = u  \in \mathscr{D}_{\vartheta}^{\alpha,2}$. By, we obtain
\begin{align}
    \|\mathscr{H}u_n - f\|_{L^2} \leq & \lim_{n\rightarrow \infty}\|\mathscr{H}(u - u_n)\|_{L^2}+\lim_{n\rightarrow \infty}\|\mathscr{H}u_{n}-f\|_{L^2} \nonumber \\
    \lesssim & \lim_{n\rightarrow \infty}\|u^{\sharp} - u^{\sharp}_n\|_{H^2} + \lim_{n\rightarrow \infty}\|u - u_n\|_{L^2}+\lim_{n\rightarrow \infty}\|\mathscr{H}u_{n}-f\|_{L^2} \nonumber \\
    = & 0.
\end{align}
Thus the Anderson hamiltonian $\mathscr{H}$ is closed.

Then we approximate $\mathscr{H}$ by the self-adjoint operator $\mathscr{H}^{\epsilon}=\mathscr{L} - \xi_{\epsilon} - C_{\epsilon}$. For every $u \in \mathscr{D}_{\vartheta}^{\gamma,2}$, we set $u_{\epsilon}=\Gamma_{\epsilon}(u^{\sharp})$. Then $u_{\epsilon}= u_{\epsilon}\prec\vartheta_{\epsilon} - R_{\epsilon}(u_{\epsilon}) + u^{\sharp} \in H^2$. Further, Lemma \ref{Gam} implies that $u$ can be approximated as
\begin{equation}\label{uApp}
    \lim_{\epsilon\rightarrow 0}\|u - u_{\epsilon}\|_{H^{\gamma}} \lesssim \lim_{\epsilon\rightarrow 0}\|\Gamma u^{\sharp} -\Gamma_{\epsilon} u^{\sharp}\|_{H^{\alpha}} = 0.
\end{equation}
Moreover, $\mathscr{H}_{\epsilon}u_{\epsilon} $ can be written as
\begin{align}
    \mathscr{H}_{\epsilon}u_{\epsilon} = & \mathscr{L} u_{\epsilon} - \xi_{\epsilon}u_{\epsilon} - C_{\epsilon}u_{\epsilon} \nonumber \\
                 = & \mathscr{L} u_{\epsilon}  - u_{\epsilon}\diamond\xi_{\epsilon} \nonumber \\
                 = & \mathscr{L}u^{\sharp} - u^{\sharp}\circ\xi_{\epsilon} - R_{\epsilon}(u_{\epsilon})\circ\xi_{\epsilon}  - u_{\epsilon} \prec \mathscr{U}_{\leq} \xi_{\epsilon} - u_{\epsilon}\succ \mathscr{U}_{\leq} \xi_{\epsilon} - u_{\epsilon}\succ \mathscr{U}_{\leq}(\vartheta_{\epsilon}\circ\xi_{\epsilon})  \nonumber\\
                 & - u_{\epsilon} \prec \mathscr{U}_{\leq}(\vartheta_{\epsilon}\diamond\xi_{\epsilon})- C(u_{\epsilon},\vartheta_{\epsilon},\xi_{\epsilon}) -u_{\epsilon}\circ(\vartheta_{\epsilon} \diamond \xi_{\epsilon}) \nonumber \\
                 := & \mathscr{L} u^{\sharp} - \Psi_{\epsilon}(u_{\epsilon}).
\end{align}
Note that $\xi_{\epsilon} \rightarrow \xi$ in $\mathscr{C}^{-1-\kappa}$, $\vartheta_{\epsilon}\rightarrow \vartheta$ in $\mathscr{C}^{1-\kappa}$, and $\vartheta_{\epsilon}\diamond\xi_{\epsilon} \rightarrow \vartheta\diamond\xi$ in $\mathscr{C}^{-2\kappa}$ as $\epsilon \rightarrow 0$. Similar estimates for $R(u)$ and $R_{\epsilon}(u)$, we have approximation $\lim_{\epsilon \to 0}\|\Psi - \Psi_{\epsilon}\|_{L(H^{\alpha}, L^2)}=0$.
By estimate (\ref{Epsi}) and (\ref{uApp}), it follows that
\begin{align}\label{G0E}
    & \lim_{\epsilon \to 0}\|\mathscr{H}u- \mathscr{H}_{\epsilon}u_{\epsilon}\|_{L^2} \nonumber \\
    = & \lim_{\epsilon \to 0}\|\Psi(u) - \Psi_{\epsilon}(u_{\epsilon})\|_{L^2} \nonumber \\
    \leq & \lim_{\epsilon \to 0}\|\Psi(u - u_{\epsilon})\|_{L^2} + \lim_{\epsilon \to 0}\|\Psi(u_{\epsilon}) - \Psi_{\epsilon}(u_{\epsilon})\|_{L^2} \nonumber \\
    \lesssim &  (1+\|\xi\|^2_{\mathscr{C}^{-1-\kappa}} + \|\vartheta\diamond\xi\|^2_{\mathscr{C}^{-2\kappa}}) \lim_{\epsilon \to 0}\| u - u_{\epsilon}\|_{H^{\alpha}} + \lim_{\epsilon \to 0}\|\Psi - \Psi_{\epsilon}\|_{L(H^{\alpha}, L^2)} \|u_{\epsilon}\|_{H^{\alpha}} \nonumber\\
    = & 0.
\end{align}
By approximations (\ref{uApp}) and (\ref{G0E}), for every $u, v \in \mathscr{D}_{\vartheta}^{\gamma,2}$ we have
\begin{equation}
    \langle v, \mathscr{H}u\rangle = \lim_{\epsilon \rightarrow 0}\langle v_{\epsilon}, \mathscr{H_{\epsilon}}u_{\epsilon} \rangle = \lim_{\epsilon \rightarrow 0}\langle u_{\epsilon}, \mathscr{H_{\epsilon}}v_{\epsilon} \rangle = \langle u, \mathscr{H}v\rangle.
\end{equation}
Thus the Anderson hamiltonian $\mathscr{H}$ is symmetric on $L^2$.
\end{proof}

Now we prove the self-adjointness of the operator $\mathscr{H} + C_{\xi} I$.

\begin{proof}
(Proof of Theorem \ref{sfH}) By Theorem \ref{ThmBh}, we have $c\|u\|^2_{\mathscr{D}_{\vartheta}^{\alpha,1}} \leq \langle u, (\mathscr{H}+ C_{\xi} I)u \rangle \leq C \|u\|^2_{\mathscr{D}_{\vartheta}^{\alpha,1}}$ for every $ u \in \mathscr{D}_{\vartheta}^{\alpha,2}$.
Then by Friedrichs extension theorem (see e.g. \cite[Theorem XI. 7.2]{Y1995}), the operator $\mathscr{H}+\lambda I$ is a positive self-adjoint operator from $\mathscr{D}_{\vartheta}^{\alpha,1}$ to $L^2$.
\end{proof}

\medskip

\end{document}